\newcommand{\Cent}{\mathrm{Cent}}
\theoremstyle{definition}
\newtheorem{theorem}{Theorem}[section]
\newtheorem{lemma}[theorem]{Lemma}
\newtheorem{corollary}[theorem]{Corollary}
\newtheorem{ex}[theorem]{Example}
\newtheorem{remark}[theorem]{Remark}
\newtheorem{definition}[theorem]{Definition}
\newtheorem*{ex*}{Example}
\newtheorem*{cor*}{Corollary}
\newcommand{\Z}{\mathbb{Z}}
\title{Homomorphism Conjugacy versus Centralizer Actions in the Symmetric Group}
\author{Suleika Norrbom}
\date{}
\begin{document}
\maketitle
\bibliographystyle{plainnat}
\begin{abstract} 
\noindent We explore when generator-conjugate homomorphisms are conjugate and when element-conjugate homomorphisms are conjugate from abelian or dihedral groups to the symmetric group. We completely determine when such homomorphisms are conjugate in the case where the source group has two generators by studying centralizer actions in the target group. \end{abstract}
\section{Introduction}
Element-conjugate linear representations of finite degree are conjugate if the underlying field is algebraically closed and of characteristic zero. We are motivated by exploring the relationship between element-conjugacy and homomorphism conjugacy for other target groups. M. Larsen initiated this discussion of the relationship between element-conjugate homomorphisms and conjugate homomorphisms for Lie groups in his papers from 1994 \cite{larsen94} and 1996 \cite{larsen96}. In the papers, a group $G$ is acceptable if a homomorphism from a group $\Gamma$ to $G$ is determined up to conjugacy by the conjugacy classes of the elements $\phi(\gamma)$. Larsen made progress toward the classification of acceptable subgroups of the general linear group where  all algebraic groups are assumed to be connected even if their associated Lie groups are not. Weidner later extended this  work by using pseudocharacters to answer more questions about acceptable algebraic groups \cite{weidner20}.

We focus on a discrete variation of this problem by determining under which conditions some generator-conjugate or element-conjugate homomorphisms to the symmetric group are conjugate. 

\subsection{Notation and Important Definitions}

\subsubsection{Notation}
Let $G$ be a group and $x$ be an element in $G$, $\Cent_G(x)$ denotes the centralizer of $x$ in $G$. We have that $S_n$ is the symmetric group of the set $\{1,2,...,n\}$ while $S_X$ is the symmetric group of the set $X$. Let $\sigma \in S_n$, then $\mathbf{Fix}(\sigma)$ is the set of fix points of $\sigma$ in $\{1,2,..,n\}$. We set $X_{\sigma} = \{1,2,...,n\} \setminus \mathbf{Fix}(\sigma)$. Let $\sigma_1,\sigma_2 \in S_n$. We set $X_{\sigma_1,\sigma_2}=\{1,2,...,n\}\setminus \mathbf{Fix}(\sigma_1) \cup \mathbf{Fix}(\sigma_2).$
\subsubsection{Definitions} 
Let $G$ and $H$ be groups and let $\varphi,\psi$ be homomorphisms from $G$ to $H$. 
\begin{definition}[\textbf{Generator-conjugate}]
Assume $G$ is generated by the elements $g_1,g_2,...,g_n \in G$ and that for each $g_i$, there exists some $h \in H$ so that 
\begin{equation}
h\varphi(g_i)h^{-1} = \psi(g_i),
\end{equation}
then $\varphi, \psi$ are generator-conjugate. 
\end{definition}

\begin{definition}[\textbf{Element-conjugate}] If for every $g \in G$, there exists some $h \in H$ so that 
\begin{equation}
h\varphi(g)h^{-1} = \psi(g), 
\end{equation}
we say $\varphi, \psi$ are element-conjugate. 
\end{definition}

\begin{definition}
If there exists some $h \in H$ so that for all $g \in G$: 
\begin{equation}
h\varphi(g)h^{-1} = \psi(g), 
\end{equation}
then $\varphi, \psi$ are \textbf{conjugate}.
\end{definition}

\subsection{Summary}

We begin by studying actions induced by centralizers in the symmetric group. Assume $\sigma \in S_n$. The first action is due to the invariance of cycle type under conjugation in the symmetric group and gives a direct product decomposition of the centralizer $\Cent_{S_n}(\sigma)$(Lemma \ref{directproduct}). Let $\sigma^\prime \in S_n$ be the product of $k$ disjoint cycles of length $d$,$\; \tau_1\tau_2\ldots\tau_k$. The centralizer of $\sigma^\prime$ acts transitively by conjugation on the set $\{\tau_1,\tau_2,\ldots,\tau_k\}$ (Lemma \ref{actioncycles}). 

These two actions to a large extent determine whether two elements $\pi,\pi^\prime\in S_n$ are conjugate by some element $\rho$ in the centralizer of $\sigma$. 

Let $\varphi,\psi$ be generator-conjugate homomorphisms from a group $G$ to the symmetric group. We assume that both homomorphisms send some generator $g$ to the same point $\sigma$ in $S_n$. If $\varphi, \psi$ are conjugate as homomorphisms, there must exist some $\rho \in \Cent_{S_n}(\sigma)$ so that $\rho\varphi\rho^{-1}=\psi$. We are concerned with determining when such an element $\rho$ exists. Exploring centralizer actions in relation to homomorphism conjugacy in this way, we obtain our main results. 
\subsubsection{Main results of the paper}
Assume $\varphi,\psi$ are homomorphisms from an abelian group with two generators or a dihedral group, to the symmetric group $S_n$. In these cases we: 
\begin{itemize}
    \item Describe under which conditions generator-conjugate homomorphisms are conjugate (Theorem \ref{abeliantheorem} and Theorem \ref{dihedraltheorem}).
    \item Detail the relationship between element-conjugate homomorphisms and conjugate homomorphisms (Corollary \ref{localglobalabelian} and Corollary \ref{localglobaldihedral}). 
\end{itemize}

\subsection{Possible Future Directions}
The dihedral case describes the relationship between generator-conjugacy, element-conjugacy, and
conjugacy for homomorphisms from a finite reflection group with two generators to the symmetric
group. A natural next step could be attempting to generalize some of these results to other finite
reflection groups as source groups. In Lemma \ref{actioncycles}, we prove that the centralizer of an element $\sigma=\tau_1\tau_2..\tau_k$, the product of $k$-cycles of length $d$, acts transitively on the set $\{\tau_1,\tau_2,..,\tau_k \}$. However, we later note that there is also a larger subgroup $H_{\sigma}$ that acts transitively on the set  $\{ \{\tau_1\},...\{\tau_k \}\}$, where $\{\tau_i\}$ is the set of points in the cycle $\tau_i$ (Lemma \ref{actiondihedral}): 
\begin{equation}
H_{\sigma} = \{ x \in S_n | x\sigma x^{-1} = \sigma^k, k \in \mathbb{Z} \}
\end{equation}
Note that any finite reflection group $S$ can be represented in the form: 
\begin{equation}
S = \langle s_1,s_2,...,s_n \;| \; s_i^2=1, (s_is_j)^{m_{i,j}}=1 , \; m_{i,j} \in \mathbb{Z} \rangle,
\end{equation}
and that $s_i,s_j \in H_{s_is_j}$:
\begin{equation}
s_i(s_is_j)s_i = s_j(s_is_j)s_j = s_js_i = (s_is_j)^{-1}.
\end{equation}
A way to work out the relations between element-conjugacy and conjugacy for homomorphisms to the symmetric group from an arbitrary finite reflection group might be through this connection to actions, in a similar way to the abelian or dihedral case we have already covered.

Further, we could also let the target group itself be any  finite reflection group and try to further generalize the results to this setting. 

Another potential route is by first noting that $S_n$ is the Weyl group of $\mathrm{GL}(n,\mathbb{C})$. It could then be interesting to explore the connection between element-conjugacy versus conjugacy for homomorphisms to each group. This could be expanded into attempting to explain this relation for other connected, reductive groups $G$ and their corresponding Weyl groups. 

\section{Centralizer Actions in the Symmetric Group}

Let $\sigma$ be an arbitrary permutation in $S_n$. We begin by defining the notion of a non-trivial centralizer. 

\begin{definition} [Non-trivial Centralizer] We call the centralizer of $\sigma$ in the symmetric group $S_{X_{\sigma}}$ where $X_{\sigma}=\{1,2,...,n\}\setminus \mathbf{Fix}(\sigma)$ the non-trivial centralizer of $\sigma$ and denote it $\mathrm{Cent}_0(\sigma)$.
\end{definition}

We may rewrite $\sigma$ as a product of disjoint cycles and group the cycles of the same length together such that 
\begin{equation}
\sigma = \sigma_1\sigma_2...\sigma_l,
\end{equation}
where each $\sigma_i$ is the product of $k_i$ disjoint cycles of length $d_i$ and all the integers $d_1,...,d_l$ are distinct. The first important centralizer action we observe is induced by the invariance of cycle type under conjugation in $S_n$. 

\begin{lemma}[Direct Product Decomposition of Centralizer]
\label{directproduct}
The centralizer of $\sigma=\sigma_1\sigma_2...\sigma_l$ has a direct product decomposition in $S_n$
\begin{equation}
\mathrm{Cent}_{S_n}(\sigma)=S_{\mathbf{Fix}(\sigma)}\times\mathrm{Cent}_0(\sigma_1)\times\mathrm{Cent}_0(\sigma_2)\times...\times\mathrm{Cent}_0(\sigma_l)
\end{equation}
\end{lemma}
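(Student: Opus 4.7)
The plan is to prove the two inclusions and then verify that the resulting decomposition is indeed a direct product. A key preliminary observation is that, since the integers $d_1, \ldots, d_l$ are distinct, the supports $X_{\sigma_1}, \ldots, X_{\sigma_l}$ are pairwise disjoint, and together with $\mathbf{Fix}(\sigma)$ they partition $\{1, 2, \ldots, n\}$.

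For the inclusion $\supseteq$, I would argue that elements drawn from any two distinct factors on the right-hand side act on disjoint subsets of $\{1, \ldots, n\}$ and therefore commute. An element of $S_{\mathbf{Fix}(\sigma)}$ fixes $X_{\sigma}$ pointwise and hence commutes with $\sigma$; an element of $\mathrm{Cent}_0(\sigma_i)$ lies in $S_{X_{\sigma_i}}$ by definition, commutes with $\sigma_i$, and fixes the supports of all $\sigma_j$ with $j \neq i$ pointwise, so it commutes with each $\sigma_j$ and thus with $\sigma$. The product of one element from each factor therefore lies in $\mathrm{Cent}_{S_n}(\sigma)$.

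For the harder inclusion $\subseteq$, let $\rho \in \mathrm{Cent}_{S_n}(\sigma)$. Conjugation by $\rho$ sends each disjoint cycle of $\sigma$ to a disjoint cycle of the same length appearing in $\sigma$. Since the $d_i$ are pairwise distinct, $\rho$ must permute the cycles of $\sigma_i$ among themselves, forcing $\rho(X_{\sigma_i}) = X_{\sigma_i}$ for every $i$; consequently $\rho(\mathbf{Fix}(\sigma)) = \mathbf{Fix}(\sigma)$ as well. Writing $\rho = \rho_0 \rho_1 \cdots \rho_l$, where $\rho_0$ is the restriction to $\mathbf{Fix}(\sigma)$ and $\rho_i$ the restriction to $X_{\sigma_i}$, each factor lies in the corresponding symmetric group on its invariant set. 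Restricting the identity $\rho \sigma \rho^{-1} = \sigma$ to each $X_{\sigma_i}$ yields $\rho_i \sigma_i \rho_i^{-1} = \sigma_i$, so $\rho_i \in \mathrm{Cent}_0(\sigma_i)$.

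The main obstacle is this second inclusion, and within it the step that uses distinctness of the $d_i$ to show that $\rho$ preserves each $X_{\sigma_i}$ setwise rather than merely permuting them collectively. Once this is established, the direct product structure follows because the factors pairwise commute, have trivial pairwise intersection (their non-identity elements have disjoint supports), and the decomposition $\rho = \rho_0 \rho_1 \cdots \rho_l$ is unique by disjointness of supports.
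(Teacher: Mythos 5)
Your proposal is correct and follows essentially the same route as the paper's proof: the key step in both is that invariance of cycle type under conjugation, together with the distinctness of the lengths $d_1,\ldots,d_l$, forces any centralizing element to preserve each support $X_{\sigma_i}$ (and hence $\mathbf{Fix}(\sigma)$) setwise, yielding the decomposition. You simply make explicit the reverse inclusion and the directness of the product, which the paper leaves implicit.
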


\begin{proof} Assume $\pi \in \mathrm{Cent}_{S_n}$. Then
\begin{equation}
\pi\sigma_1\pi^{-1}\pi\sigma_2\pi^{-1}...\pi\sigma_l\pi^{-1}=\sigma_1\sigma_2...\sigma_l.
\end{equation}
By invariance of cycle type under conjugation, it must hold that
\begin{equation}
\pi\sigma_i\pi^{-1}=\sigma_i, \; \; 1\leq i \leq l.
\end{equation}
Every point in a cycle of length $d_i$ in $\sigma_i$ must be sent to some cycle of length $d_i$ which also lies in $\sigma_i$ by construction.
\end{proof}

This direct product decomposition of the centralizer motivates restricting our further study of centralizer actions in $S_n$ to the case where $\sigma$ is the product of $k$ disjoint cycles of length $d$. We assume $\sigma$ satisfies this condition for the remainder of this entire section.  

The second important action for our results is presented in the following lemma. 

\begin{lemma} 
\label{actioncycles}
$\mathrm{Cent}_0(\sigma)$ acts transitively by conjugation on the set $\{\tau_1,\tau_2,...,\tau_k\}$.
\end{lemma}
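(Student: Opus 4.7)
The plan is to produce, for each pair of indices $i, j \in \{1,2,\ldots,k\}$, an explicit element $\rho \in \mathrm{Cent}_0(\sigma)$ that conjugates $\tau_i$ into $\tau_j$. Since transitivity of a group action is equivalent to the existence of a group element sending a fixed base point to any other point, constructing such a $\rho$ for arbitrary $i,j$ is enough.

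First I would recall the standard fact that conjugation of a cycle by an arbitrary permutation $\rho$ relabels the entries: if $\tau_i = (a_1\, a_2\, \cdots\, a_d)$, then $\rho \tau_i \rho^{-1} = (\rho(a_1)\, \rho(a_2)\, \cdots\, \rho(a_d))$. This tells me precisely what $\rho$ needs to do on the support of $\tau_i$ in order to produce $\tau_j$, up to the choice of starting point for the cyclic notation of $\tau_j$.

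Next I would carry out the explicit construction. Fix cyclic presentations $\tau_i = (a_1\, a_2\, \cdots\, a_d)$ and $\tau_j = (b_1\, b_2\, \cdots\, b_d)$, and define
\begin{equation}
\rho = (a_1\, b_1)(a_2\, b_2)\cdots(a_d\, b_d),
\end{equation}
extended by the identity on all remaining points of $X_\sigma$ (and on $\mathbf{Fix}(\sigma)$, though this is outside the ambient group $S_{X_\sigma}$ anyway). Then $\rho \in S_{X_\sigma}$ because it permutes only points in the supports of $\tau_i$ and $\tau_j$, both of which lie in $X_\sigma$. I would then verify the centralizer condition $\rho\sigma\rho^{-1}=\sigma$: conjugation by $\rho$ fixes pointwise the support of every cycle $\tau_m$ with $m \neq i, j$, so $\rho \tau_m \rho^{-1} = \tau_m$ for those $m$, and the formula for conjugating a cycle yields $\rho \tau_i \rho^{-1} = \tau_j$ and $\rho \tau_j \rho^{-1} = \tau_i$. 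Since these cycles are disjoint and their product is $\sigma$, we conclude $\rho \sigma \rho^{-1} = \sigma$, i.e.\ $\rho \in \mathrm{Cent}_0(\sigma)$, and $\rho \tau_i \rho^{-1} = \tau_j$ as required.

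There is no serious obstacle here; the only mild care needed is to match up the cyclic orderings of $\tau_i$ and $\tau_j$ consistently before defining $\rho$, so that the conjugation formula yields exactly $\tau_j$ rather than some cyclic rotation of its notation (which would in fact still equal $\tau_j$ as a permutation, but writing it this way makes the verification immediate).
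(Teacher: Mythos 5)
Your construction is correct and is essentially the paper's own argument: the paper likewise conjugates by the explicit permutation that swaps the supports of the two cycles pointwise (in effect a product of $d$ transpositions matching up the cyclic orderings) and fixes all other points of $X_\sigma$. The only point the paper checks that you leave implicit is that conjugation by an \emph{arbitrary} element of $\mathrm{Cent}_0(\sigma)$ really does send each $\tau_i$ to some $\tau_j$, i.e.\ that the set $\{\tau_1,\ldots,\tau_k\}$ is stable under the action---this follows from uniqueness of the disjoint cycle decomposition of $\sigma$, so it is routine and does not affect your transitivity argument.
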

\begin{proof}
The identity element acts trivially on all elements in the set. We show that the orbit of $\tau_1$ is the entire set. We can set $\tau_1=(12\cdots d)$ and $\tau_i=(i_1i_2...i_d)$ and define a permutation:
\begin{equation}
\pi(j)=i, \pi(i_j) =j; 1 \leq j \leq d, \; \mathbf{Fix}(\pi) = X_{\sigma} \setminus \{1,2,...,n\}\cup\{i_1,i_2,...,i_d\}
\end{equation}
Clearly, $\pi \in \mathrm{Cent}_0(\sigma)$ and $\pi\tau_1\pi^{-1}= \tau_i$. Next, assume there exists some element $\rho \in \mathrm{Cent}_0(\sigma)$ so that $\rho\tau_1\rho^{-1} \notin \{\tau_1,\tau_2,...,\tau_k\}$. Since we assume that the cycles $\tau_1,\tau_2,..,\tau_k$ are disjoint, this implies that $\rho\tau_1...\tau_k\rho^{-1} \ne \tau_1...\tau_k$ which is a contradiction. 
\end{proof}

\begin{corollary} 
\label{kisnormal}
The subgroup $K=\langle \tau_1, \tau_2, \tau_3,...,\tau_k \rangle$ is normal in $\mathrm{Cent}_0(\sigma)$.
\end{corollary}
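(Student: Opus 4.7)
The plan is to deduce normality directly from Lemma \ref{actioncycles}. To show $K \triangleleft \mathrm{Cent}_0(\sigma)$, it suffices to check that for every $\rho \in \mathrm{Cent}_0(\sigma)$ we have $\rho K \rho^{-1} \subseteq K$, and since conjugation is an automorphism, it is enough to verify this on a generating set of $K$, namely on $\{\tau_1, \tau_2, \ldots, \tau_k\}$.

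So fix $\rho \in \mathrm{Cent}_0(\sigma)$ and an index $i \in \{1, \ldots, k\}$. By Lemma \ref{actioncycles}, the conjugation action of $\mathrm{Cent}_0(\sigma)$ permutes the set $\{\tau_1, \ldots, \tau_k\}$, so $\rho \tau_i \rho^{-1} = \tau_j$ for some index $j$. In particular $\rho \tau_i \rho^{-1} \in K$, which gives the desired inclusion $\rho K \rho^{-1} \subseteq K$. Since this inclusion holds for every element of $\mathrm{Cent}_0(\sigma)$, applying it to $\rho^{-1}$ yields the reverse containment $K \subseteq \rho K \rho^{-1}$, and hence $\rho K \rho^{-1} = K$.

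The argument is essentially a one-line corollary of the transitivity statement, so there is no real obstacle here; the only thing to be careful about is the reduction to checking conjugation on the generators, which is standard. No other features of the cycle structure or of $\sigma$ are needed beyond the fact already proved in Lemma \ref{actioncycles} that $\mathrm{Cent}_0(\sigma)$ permutes the $\tau_i$ among themselves.
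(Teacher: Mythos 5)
Your argument is correct and is exactly the intended one: the paper states this as an immediate corollary of Lemma \ref{actioncycles} (whose proof already shows that conjugation by any element of $\mathrm{Cent}_0(\sigma)$ sends each $\tau_i$ to some $\tau_j$), and your reduction to the generators plus the inverse trick fills in that routine step faithfully.
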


Note that the associated permutation representation of the action of $\mathrm{Cent}_0(\sigma)$ on $\{ \tau_1,...,\tau_k \}$ is precisely given by the homomorphism whose kernel is $K$:
\begin{equation}
\mathrm{Cent}_0(\sigma) \longrightarrow \mathrm{Cent}_0(\sigma)/K \cong S_k
\end{equation}
Each element $\pi \in \mathrm{Cent}_0(\sigma)$ may therefor by its image under $\mathrm{Cent}_0(\sigma)$ be associated with some element in $S_k$, describing its action on the cycles $\tau_1,...,\tau_k$.

\begin{remark}
Let $\pi \in \mathrm{Cent_0}(\sigma)$. The cycle type of $\pi$ under the homomorphism $\mathrm{Cent}_0(\sigma)/K$ is invariant under conjugation in $\mathrm{Cent}_0(\sigma)$. 
\end{remark} 

This is just the result that cycle type is invariant under conjugation in the symmetric group. However, this remark is significant for our discussion about homomorphism conjugacy since two elements $\pi,\pi' \in \mathrm{Cent}_0(\sigma)$ may be conjugate in $S_n$ but have distinct cycle types under $\mathrm{Cent}_0(\sigma)/K$. 

\begin{ex}
Assume $\sigma = (12)(34)(56)(78)$ and set $\pi=(1324)(5768)$ and $\pi'=(1357)(2468)$. The image of $\pi$ under
 $\mathrm{Cent}_0(\sigma)/K$ is $(12)(34)$ while the image of $\pi'$ under $\mathrm{Cent}_0(\sigma)$ is $(1234)$
\end{ex}

We now split our focus on centralizer actions in the symmetric group in two separate directions, based on the two source groups (abelian or dihedral) we are interested in exploring homomorphism conjugacy for.

In the abelian case, we would like to determine when for two elements $\pi,\pi' \in \mathrm{Cent}_0(\sigma)$, there exists some $\rho \in \mathrm{Cent}_0(\sigma)$ so that 
\begin{equation}
\rho\pi'\rho^{-1}=\pi.
\end{equation}

In the dihedral case, we are interested in elements $\pi,\pi'$, conjugate in $S_n$, where
\begin{equation}
\pi^2=(\pi')^2=1 \; \; \mathrm{and} \;\; \pi\sigma\pi^{-1}=(\pi')\sigma(\pi')^{-1}=\sigma^{-1}.
\end{equation}
In particular, we wish to determine when there exists some $\rho \in \mathrm{Cent_0}(\sigma)$ such that 
\begin{equation}
\rho\pi'\rho^{-1} = \pi.
\end{equation}

\subsection{Centralizer Actions for Abelian Source}

Assume as before that $\sigma \in S_n$ is the product of $k$ disjoint cycles of length $d$:
\begin{equation}
\tau_1\tau_2...\tau_k.
\end{equation}
    From now on, we also assume that $\mathbf{Fix}(\sigma)= \emptyset$. The primary goal of this subsection is to completely determine under which conditions two elements $\pi,\pi' \in \mathrm{Cent}_0(\sigma)$ are conjugate in $\mathrm{Cent}_0(\sigma)$. It is sufficient to explore this problem for the case where the image of $\pi$ is a single cycle of length $m$ under $\mathrm{Cent}_0(\sigma)/K$: 
\begin{equation}
\mathrm{Cent}_0(\sigma)/K: \pi \mapsto (12\cdots m).
\end{equation}

\begin{remark}
\label{frstremark}
If $\pi,\pi'$ are conjugate in $\mathrm{Cent}_0(\sigma)$, they have the same cycle type under $\mathrm{Cent}_0(\sigma)/K$.
\end{remark}
We have that this follows immediately from the invariance of cycle type under conjugation in the symmetric group. This in conjunction with that $K = \langle \tau_1,\tau_2,..,\tau_k \rangle$ is normal in $\mathrm{Cent}_0(\sigma)$ by Corollary \ref{kisnormal} means we may write any element in the non-trivial centralizer as a product of some element in $K$ and some element not in $K$, whose preimage of the identity under $\Cent_0(\sigma)/K$ is trivial. 

\begin{remark} 
\label{decomp}
We can rewrite the elements $\pi,\pi'$ such that 
\begin{equation}
\pi = \mu\pi_0, \pi=\mu'\pi_0', 
\end{equation}
where $\mu, \mu' \in K$ and $\pi_0,\pi_0'$ permutes no cycles trivially, i.e either all points in a cycle $\tau_i$ are fixed or sent to the points in some distinct cycle $\tau_j$. 
\end{remark}

This in conjunction with the action on the set $\{\tau_1,...,\tau_k\}$ (Lemma \ref{actioncycles}) results in an additional condition for conjugacy in $\mathrm{Cent}_0(\sigma)$: 

\begin{remark}
\label{sndremark}
There are integers $z_1,z_2,...,z_k$ and $n_1,n_2,...,n_k$ such that 
\begin{equation}
\mu = \tau_1^{z_1}\tau_2^{z_2}...\tau_k^{z_k} \; \; \mathrm{and} \; \; \mu' = \tau_1^{n_1}\tau_2^{n_2}...\tau_k^{n_k}.
\end{equation}
If $\pi,\pi'$ are conjugate in $\mathrm{Cent}_0(\sigma)$, then the lists of integers $z_1,z_2,...,z_k$ and $n_1,n_2,...,n_k$ are identical up to rearrangement. 
\end{remark}

We now make an important observation about $\pi_0$ in the decomposition $\pi = \mu\pi_0$ as in Remark \ref{decomp}. 

\begin{lemma} We have that
\begin{equation}
\pi_0^m = \tau_1^z\tau_2^{z}...\tau_m^{z},
\end{equation}
for some integer $z$.
\end{lemma}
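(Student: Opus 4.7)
The plan is to exploit the fact that $\pi_0^m$ automatically lies in $K$ and then use conjugation by $\pi_0$ itself to force symmetry among the resulting exponents. Since the image of $\pi_0$ in $\Cent_0(\sigma)/K \cong S_k$ is the $m$-cycle $(12\cdots m)$, the image of $\pi_0^m$ is trivial, so $\pi_0^m \in K$ and we may write
\[
\pi_0^m = \tau_1^{z_1}\tau_2^{z_2}\cdots\tau_k^{z_k}
\]
with exponents well-defined modulo $d$. The statement then reduces to showing that $z_i \equiv 0 \pmod{d}$ for $i>m$ and $z_1 \equiv z_2 \equiv \cdots \equiv z_m \pmod{d}$.

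First I would dispose of the indices $i>m$. By the description in Remark \ref{decomp}, $\pi_0$ permutes no cycle trivially, and its induced action on $\{\tau_1,\dots,\tau_k\}$ is the $m$-cycle $(12\cdots m)$, so the cycles $\tau_{m+1},\dots,\tau_k$ are fixed by this action. The ``no trivial permutation'' condition then forces $\pi_0$ to fix every point in their supports, and hence so does $\pi_0^m$, giving $z_i \equiv 0 \pmod{d}$ for every $i>m$.

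For the remaining exponents, the key intermediate fact I would establish is the relation
\[
\pi_0\tau_i\pi_0^{-1} = \tau_{i+1} \quad (1 \le i < m), \qquad \pi_0\tau_m\pi_0^{-1} = \tau_1.
\]
This follows because $\pi_0$ centralizes $\sigma = \tau_1\cdots\tau_k$, so conjugation reproduces the same disjoint cycle decomposition; since $\pi_0$ carries the support of $\tau_i$ onto the support of $\tau_{i+1}$ for $i<m$ and that of $\tau_m$ onto that of $\tau_1$, uniqueness of the disjoint cycle decomposition pins down each $\pi_0\tau_i\pi_0^{-1}$ as the expected cycle. With this in hand, I would apply the identity $\pi_0\pi_0^m\pi_0^{-1} = \pi_0^m$ to the expression $\pi_0^m = \tau_1^{z_1}\cdots\tau_m^{z_m}$ and compare both sides to obtain
\[
\tau_1^{z_1}\tau_2^{z_2}\cdots\tau_m^{z_m} = \tau_1^{z_m}\tau_2^{z_1}\tau_3^{z_2}\cdots\tau_m^{z_{m-1}}.
\]
Matching disjoint factors and using that each $\tau_i$ has order $d$ then yields $z_1 \equiv z_m$, $z_2 \equiv z_1$, and so on, collapsing all the exponents to a single common value $z$.

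The step I expect to be the main obstacle is the clean derivation of the conjugation relation $\pi_0\tau_i\pi_0^{-1} = \tau_{i+1}$ (with wraparound at $i=m$), since this is precisely the bridge that carries the cyclic structure in the quotient action over into rigidity among the $z_i$. Once it is in place, the collapse of the exponents is immediate from disjointness and the order of each $\tau_i$.
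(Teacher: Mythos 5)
Your proof is correct and takes essentially the same route as the paper: both rest on $\pi_0^m \in K$ together with the fact that conjugation by powers of $\pi_0$ cyclically permutes $\tau_1,\dots,\tau_m$ while commuting with $\pi_0^m$, which forces all the exponents to coincide. The paper phrases this as a contradiction (if $z_i \ne z_j$ then $\pi_0^{j-i}$ would fail to commute with $\pi_0^m$), whereas you derive the cyclic-shift identity directly and make explicit the relation $\pi_0\tau_i\pi_0^{-1}=\tau_{i+1}$ and the vanishing of the exponents for $i>m$, which the paper leaves implicit.
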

\begin{proof}
It is clear that $\pi_0^m \in \ker(\mathrm{Cent}_0(\sigma)/K)$. Then $\pi_0^m = \tau_1^{z_1}\tau_2^{z_2}...\tau_m^{z_m}$ for some integers $z_1,z_2,..,z_m$. Assume there exists a pair of integers $i<j$ such that $z_i \ne z_j$. This implies that $\pi_0^{j-i}$ does not commute with $\pi_0^m$ which is a contradiction. 
\end{proof}

This lemma makes it possible to define the last necessary condition for conjugacy in $\mathrm{Cent}_0(\sigma)$. 

\begin{remark}
\label{trdremark}
We rewrite the elements $\pi,\pi'$ such that 
\begin{equation}
\pi = \mu\pi_0, \pi=\mu'\pi_0', 
\end{equation}
as in Remark \ref{decomp}. If $\pi,\pi'$ are conjugate in $\mathrm{Cent}_0(\sigma)$, then $\pi_0^m=\tau_1^{z}\tau_2^{z}...\tau_m^{z}$ and $(\pi_0')^m=\tau_{i_1}^z\tau_{i_2}^z...\tau_{i_m}^z$ for an integer $z$.
\end{remark}
\begin{proof}
By that $K$ is normal in $\mathrm{Cent}_0(\sigma)$, we must have that $\pi_0,\pi_0'$ are conjugate in $\mathrm{Cent}_0(\sigma)$. Assume that $\pi_0^m=\tau_1^z...$ and $(\pi_0')^m=\tau_{i_1}^{z'}...$ for some distinct integers $z, z'$. Then $\tau_1^z, \tau_{i_1}^{z'}$ are conjugate in $\mathrm{Cent}_0(\sigma)$. This implies that the orbit of $\tau_1$ under the action of $\mathrm{Cent}_0(\sigma)$ is not the set $\{\tau_1,\tau_2,..,\tau_k\}$ which is a contradiction by Lemma \ref{actioncycles}. 
\end{proof}

We are ready to define precisely when $\pi, \pi'$ are conjugate in $\mathrm{Cent}_0(\sigma)$. 

\begin{lemma} 
\label{importantabelianlemma}
Let $\pi,\pi'$ be two elements in the non-trivial centralizer of $\sigma=\tau_1...\tau_k$ where 
\begin{equation}
\mathrm{Cent}_0(\sigma)/K : \pi \mapsto (12\cdots m),
\end{equation}
recalling that $K= \langle \tau_1,\tau_2,...,\tau_k \rangle$. 
We may rewrite $\pi, \pi'$ by Remark \ref{decomp} so that $\pi=\mu\pi_0$ and $\pi'=\mu'\pi_0'$ where $\mu, \mu' \in K$ and $\pi_0,\pi_0' \notin K$. Then $\mu_0=\tau_1^{z_1}...\tau_k^{z_k}$ for some integers $z_1,z_2,..,z_k$ and $\mu_0=\tau_1^{n_1}...\tau_k^{n_k}$ for some integers $n_1,n_2,..,n_k$. 
The elements $\pi, \pi'$ are conjugate in $\mathrm{Cent}_0(\sigma)$ if and only if:
\begin{enumerate}[(i)]
\item The image of $\pi'$ under $\mathrm{Cent}_0(\sigma)/K$ is also an $m$-cycle: 
\begin{equation}
\mathrm{Cent}_0/K: \pi' = (i_1i_2\cdots i_m).
\end{equation}
\item The lists $z_1,..,z_k$ and $n_1,...,n_k$ are identical up to rearrangement. 
\item There exists an integer $z$ so that $\pi_0^m=\tau_1^z...\tau_m^z$ and $(\pi_0')^m=\tau_{i_1}^z...\tau_{i_m}^z$
\end{enumerate}
\end{lemma}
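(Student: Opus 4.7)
The implication ``$\pi,\pi'$ conjugate in $\Cent_0(\sigma) \Rightarrow$ (i), (ii), (iii)'' is essentially the combined content of Remarks \ref{frstremark}, \ref{sndremark}, and \ref{trdremark}, so the main task is to establish sufficiency. My plan is to build a conjugator $\rho \in \Cent_0(\sigma)$ with $\rho\pi\rho^{-1}=\pi'$ explicitly, in two stages: first arrange that the images in $S_k$ align and the $\mu$-parts match, then correct the residual difference which lies in $K$.

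\emph{Stage 1.} Using the surjection $\Cent_0(\sigma) \to \Cent_0(\sigma)/K\cong S_k$, I would choose $\rho_1\in\Cent_0(\sigma)$ whose image $\eta\in S_k$ conjugates $(1\,2\cdots m)$ to $(i_1\,i_2\cdots i_m)$; condition (i) is precisely what guarantees such an $\eta$ exists. The freedom in $\eta$ splits into $m$ cyclic choices on the support $\{1,\dots,m\}$ and $(k-m)!$ arbitrary choices on the complement. Condition (ii) is then invoked to refine this choice so that the induced relabeling of exponents carries $(z_1,\dots,z_k)$ to $(n_1,\dots,n_k)$ along $\eta$, yielding $\rho_1\mu\rho_1^{-1}=\mu'$.

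\emph{Stage 2.} After Stage 1, the elements $\rho_1\pi\rho_1^{-1}$ and $\pi'$ share the same $\mu$-part and the same image $(i_1\cdots i_m)$ in $S_k$, so they differ by an element of $K$ supported on $\{\tau_{i_1},\dots,\tau_{i_m}\}$. I would look for a correction $\rho_2=\tau_{i_1}^{b_1}\cdots\tau_{i_m}^{b_m}\in K$ such that $\rho_2(\rho_1\pi\rho_1^{-1})\rho_2^{-1}=\pi'$. Commuting $\rho_2$ past $\pi_0'$ via $\pi_0'\tau_{i_l}(\pi_0')^{-1}=\tau_{i_{l+1}}$ (indices mod $m$) turns the conjugation condition into a cyclic linear system in the $b_l$'s over $\mathbb{Z}/d$ whose coefficient matrix has rank $m-1$; the system is solvable iff a certain telescoping sum of the right-hand sides vanishes modulo $d$. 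Condition (iii), which forces the common exponent $z$ appearing in $\pi_0^m=\tau_1^z\cdots\tau_m^z$ and $(\pi_0')^m=\tau_{i_1}^z\cdots\tau_{i_m}^z$, is exactly what makes this total sum vanish, so $\rho_2$ exists.

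Setting $\rho=\rho_2\rho_1$ then produces the desired conjugator. The main obstacle I expect is Stage 2: the bookkeeping for tracking how conjugation by an element of $K$ alters the shift parameters hidden inside $\pi_0'$ is delicate, and one must verify that condition (iii) really does kill the obstruction cocycle in $\mathbb{Z}/d$. Stage 1 is essentially combinatorial, reducing to extracting the correct $\eta$ from conditions (i) and (ii) and lifting it via Lemma \ref{actioncycles}.
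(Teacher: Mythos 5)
Your proposal is correct and follows essentially the same route as the paper: first use the transitive action on the cycles together with conditions (i) and (ii) to align the $K$-parts and the images in $S_k$, then correct the remaining discrepancy by conjugating with an element $\tau_{i_1}^{b_1}\cdots\tau_{i_m}^{b_m}\in K$, with condition (iii) guaranteeing this is possible. Your Stage 2 cyclic system $b_l-b_{l-1}\equiv c_l \pmod d$, solvable exactly when the telescoping sum vanishes, is just a more explicit rendering of the paper's final step (where the elements are determined by the first $m$ points of a cycle and suitable powers $\tau_i^{h_i}$ are chosen), so no further changes are needed.
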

\begin{proof}
It follows from Remark \ref{frstremark}, \ref{sndremark}, and \ref{trdremark} that each of the conditions are necessary. It remains to prove that they are sufficient.  Look at the list of integers $n_1,..,n_k$. It is clear that 
\begin{equation}
n_{i_1}=n_{i_2}=...=n_{i_m}=0,
\end{equation}
by the image of $\pi'$ under $\Cent_0(\sigma)/K$. By the transitive action on the cycles $\{\tau_1,\tau_2,..,\tau_k\}$ and the fact that the lists $z_1,..,z_k$ and $n_1,...,n_k$ are identical up to rearrangement there exists some $\rho \in \Cent_0(\sigma)$ such that 
\begin{equation}
\rho\mu_0'\rho^{-1}=\mu_0 \; \; \mathrm{and} \; \; \mathrm{Cent_0(\sigma)}/K: \rho\pi'\rho^{-1} \mapsto (12\cdots m).
\end{equation}
 We are done if we can prove that there exists some integers $j_1,j_2,...,j_m$ so that 
\begin{equation}
(\tau_{1}^{j_1}\tau_{2}^{j_2}...\tau_{m}^{j_m})(\rho\pi_0'\rho^{-1})(\tau_{1}^{j_1}\tau_{2}^{j_2}...\tau_{m}^{j_m})^{-1} = \pi_0.
\end{equation}
Set $\rho\pi_0'\rho^{-1}=\pi_0''$. By condition (iii), there exists some integer $z$ so that 
\begin{equation}
(\pi_0)^m=(\pi_0'')^m=\tau_1^z\tau_2^z...\tau_m^z.
\end{equation}
This implies that $\pi_0, \pi_0''$ have the same cycle type. Let $d'$ denote the length of the cycles in $\tau_1^z$. Then $\pi_0, \pi_0''$ have cycle type $\frac{d}{d'}$ disjoint cycles of length $md'$. 

We observe that the elements $\pi_0, \pi_0''$ are completely determined by one of their cycles. To see this, let $c_0$ denote a cycle in $\pi_0$ and $c_0''$ denote a cycle in $\pi_0''$. Then 
\begin{equation}
\begin{split}
c_0& = (x_{\tau_1}\cdots x_{\tau_m}x_{\tau_1+z}\cdots x_{\tau_m+z}\cdots x_{\tau_1+d'z}\cdots x_{\tau_m+d'z}) \\
 c_0''& = (y_{\tau_1}\cdots y_{\tau_m}y_{\tau_1+z}\cdots y_{\tau_m+z}\cdots y_{\tau_1+d'z}\cdots y_{\tau_m+d'z}),
\end{split}
\end{equation}
where $x_{\tau_i}, y_{\tau_i}$ are points in the cycles $\tau_i$ , $\tau_i^{hz}(x_{\tau_i})=x_{\tau_i+hz}$, and $\tau_i^{hz}(y_{\tau_i})=y_{\tau_i+hz}$. These relations completely determine how the remaining points in the cycles $\tau_1,...\tau_m$ are sent. We see that $\pi, \pi''$ are not only determined by the cycles $c_0, c_0''$ but further, completely determined by the first $m$ points in the cycles. Naturally, there exists integers $h_1,..,h_m$ so that
\begin{equation}
\tau_i^{h_i}(x_{\tau_i})=y_{\tau_i}, \; \; 1 \leq i \leq m, \end{equation}
which concludes our argument. 
\end{proof}

\begin{corollary}
\label{relprimecor}
Assume that the order of $\sigma$, $|\tau_1...\tau_k|=d$ and the order of $\pi$ are relatively prime, then $\pi$ and $\pi'$ are conjugate if $\pi, \pi'$ are conjugate in $S_n$. 
\end{corollary}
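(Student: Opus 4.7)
The plan is to show that the pairs $(\sigma,\pi)$ and $(\sigma,\pi')$ are simultaneously conjugate in $S_n$; since $\mathbf{Fix}(\sigma)=\emptyset$ gives $\Cent_{S_n}(\sigma)=\Cent_0(\sigma)$, any such simultaneous conjugator automatically lies in $\Cent_0(\sigma)$, which is precisely what we need.

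Because $\pi$ commutes with $\sigma$ and $\gcd(d,|\pi|)=1$, the subgroups $\langle\sigma\rangle$ and $\langle\pi\rangle$ intersect trivially, and $A:=\langle\sigma,\pi\rangle$ is an internal direct product of two cyclic groups of coprime orders, hence cyclic of order $dN$ with $N:=|\pi|$. The element $\sigma\pi$, of order $\operatorname{lcm}(d,N)=dN$, generates $A$, and by the Chinese Remainder Theorem there are integers $s,t$ depending only on $d$ and $N$ with $\sigma=(\sigma\pi)^{s}$ and $\pi=(\sigma\pi)^{t}$; the analogous identities hold with $\pi$ replaced by $\pi'$. Consequently, any $\rho\in S_n$ with $\rho(\sigma\pi)\rho^{-1}=\sigma\pi'$ automatically satisfies $\rho\sigma\rho^{-1}=\sigma$ and $\rho\pi\rho^{-1}=\pi'$. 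It therefore suffices to show $\sigma\pi$ and $\sigma\pi'$ have the same cycle type in $S_n$.

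To extract the cycle type of $\sigma\pi$ from that of $\pi$, I would classify the action of $A$ on $\{1,\dots,n\}$. Each orbit has the form $A/H$ for a point stabilizer $H\le A$; since $\sigma$ acts freely (all cycles of length $d$, no fixed points), $H\cap\langle\sigma\rangle=\{1\}$, and because $A$ is cyclic this forces $H=\langle\pi^{N/N'}\rangle$ for some divisor $N'\mid N$. Such an orbit has size $dN/N'$, on which $\pi$ acts as exactly $d$ disjoint cycles of length $N/N'$, while $\sigma\pi$, being a generator of $A$, acts as one cycle of length $dN/N'$. Letting $a_{N'}$ denote the number of orbits with stabilizer of order $N'$, the cycle type of $\pi$ consists of $d\cdot a_{N'}$ cycles of length $N/N'$ for each $N'\mid N$, while $\sigma\pi$ has $a_{N'}$ cycles of length $dN/N'$ for each $N'\mid N$.

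Since $\pi$ and $\pi'$ are conjugate in $S_n$ they share cycle type, so the multiplicities $a_{N'}$ agree in the two actions, and hence $\sigma\pi\sim\sigma\pi'$ in $S_n$, which by the second paragraph completes the argument. The hard part is the stabilizer classification: it is precisely the coprime hypothesis that makes $A$ cyclic and rules out stabilizers with nontrivial $\langle\sigma\rangle$-component, and without it the cycle type of $\pi$ alone would not determine the joint orbit structure of $\sigma$ and $\pi$.
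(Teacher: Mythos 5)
Your argument is correct, but it proceeds along a genuinely different route from the paper. The paper treats the corollary as a special case of Lemma \ref{importantabelianlemma}: writing $\pi=\mu\pi_0$ as in Remark \ref{decomp}, the coprimality of $|\pi|$ and $d$ forces $\mu=\mu'=1$ and $\pi^m=(\pi')^m=1$, so conditions (i)--(iii) of that lemma hold trivially and conjugacy in $\Cent_0(\sigma)$ follows from the machinery already built. You instead bypass that machinery entirely: you reduce conjugacy in $\Cent_{S_n}(\sigma)=\Cent_0(\sigma)$ to simultaneous conjugacy of the pairs $(\sigma,\pi)$ and $(\sigma,\pi')$, observe that coprimality makes $A=\langle\sigma,\pi\rangle$ cyclic with $\sigma$ and $\pi$ both powers of $\sigma\pi$ (with exponents depending only on $d$ and $N=|\pi|$, hence usable for $\pi'$ as well), and then verify that the cycle type of $\sigma\pi$ is determined by that of $\pi$ via the orbit--stabilizer analysis of $A$ acting on $\{1,\dots,n\}$, where freeness of $\langle\sigma\rangle$ rules out stabilizers meeting $\langle\sigma\rangle$. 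Each step checks out: the stabilizer classification, the count of $d\cdot a_{N'}$ cycles of length $N/N'$ for $\pi$ versus $a_{N'}$ cycles of length $dN/N'$ for $\sigma\pi$, and the recovery of the multiplicities $a_{N'}$ from the cycle type of $\pi$ (since $N'\mapsto N/N'$ is injective) are all sound. What your approach buys is self-containedness and extra generality: it never uses the standing assumption that the image of $\pi$ under $\Cent_0(\sigma)/K$ is a single $m$-cycle, so it directly proves the stronger statement the paper only mentions afterwards (the generalization to arbitrary cycle type under $\Cent_0(\sigma)/K$), and it also yields the explicit conclusion that any conjugator of $\sigma\pi$ to $\sigma\pi'$ already centralizes $\sigma$. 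What the paper's route buys is brevity and uniformity: it exhibits the corollary as an immediate degeneration of its classification lemma, which is the tool reused throughout the rest of the paper.
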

\begin{proof}
By $\pi, \pi'$ having order relatively prime to $d$, we must have that $\mu = \mu' =1$. Also $\pi^m=1$ and since $\pi'$ conjugate to $\pi$ and relatively prime to $d$ , the image of $\pi'$ is also an $m$-cycle and $(\pi')^m=1$. The result follows from Lemma \ref{importantabelianlemma}.
\end{proof}

\begin{corollary}
\label{transpocase}
Assume that the order $\sigma$, $d=2$. The elements $\pi,\pi'$ are conjugate in $\Cent_0(\sigma)$ if $\pi,\pi'$ are conjugate in $S_n$ and have the same cycle type under $\Cent_0(\sigma)/K$
\end{corollary}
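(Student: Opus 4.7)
The plan is to verify the three conditions of Lemma \ref{importantabelianlemma}. Assume, as permitted by the reduction preceding that lemma, that the image of $\pi$ in $\Cent_0(\sigma)/K$ is the single $m$-cycle $(1\,2\cdots m)$; the hypothesis that $\pi$ and $\pi'$ have the same cycle type in $\Cent_0(\sigma)/K$ forces the image of $\pi'$ to be some $m$-cycle $(i_1\,i_2\cdots i_m)$, so condition (i) is immediate.

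Conditions (ii) and (iii) require more work, and here I exploit that the decomposition $\pi = \mu\pi_0$ is not unique: for any $i \le m$, replacing $\pi_0$ by $\tau_i\pi_0$ yields another valid decomposition (the image in $\Cent_0(\sigma)/K$ is unchanged and the external cycles $\tau_j$ for $j > m$ are still fixed pointwise), and because $d = 2$ the exponent $z_i$ in $\mu$ flips modulo $2$ while the external exponents $z_j$ for $j > m$ are unchanged. A short calculation, using the same shift identity $\pi_0 \tau_i \pi_0^{-1} = \tau_{i+1}$ that drives the proof of Lemma \ref{importantabelianlemma}, shows that this modification also flips the integer $z$ of Remark \ref{trdremark}, so that $z_i$ and $z$ move in lockstep. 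Using this flexibility I normalize the decompositions of $\pi$ and $\pi'$ so that $z_1 = \cdots = z_m = 0$ and $n_{i_1} = \cdots = n_{i_m} = 0$.

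In these normalized decompositions, $\pi$ acts on each external cycle $\tau_j$ with $j > m$ simply as $\tau_j^{z_j}$, contributing a transposition when $z_j = 1$ and two fixed points when $z_j = 0$; the remaining part of $\pi$, supported on $\tau_1 \cup \cdots \cup \tau_m$, contributes either two $m$-cycles or a single $2m$-cycle according to the normalized value of $z$, as worked out in the proof of Lemma \ref{importantabelianlemma}. Comparing these contributions with the analogous ones for $\pi'$ and using that $\pi$ and $\pi'$ share an $S_n$ cycle type, I conclude both that the external multisets $(z_j)_{j>m}$ and $(n_j)_{j \notin \{i_1,\ldots,i_m\}}$ agree (yielding condition (ii), since the multisets in question are padded by $m$ common zeros from the permuted cycles) and that the internal cycle structures agree (yielding condition (iii), since the internal structure is determined by $z$ modulo $2$). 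The most delicate step will be the bookkeeping in the case $m = 2$, where internal $2$-cycles coexist with external transpositions; knowing the value of $m$ from the $\Cent_0(\sigma)/K$ data lets me subtract the internal contribution and isolate the external count, after which the argument proceeds as above.
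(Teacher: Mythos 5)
Your proposal is correct and takes essentially the same route as the paper: both reduce the statement to Lemma \ref{importantabelianlemma} and verify its three conditions by comparing the $S_n$ cycle types of $\pi,\pi'$ with their images under $\Cent_0(\sigma)/K$. The only organizational difference is that the paper splits on the parity of the order of $\pi$ (handling the odd case via Corollary \ref{relprimecor}), whereas you treat all cases uniformly by normalizing the decomposition $\pi=\mu\pi_0$ so that the internal exponents vanish, and in doing so you make explicit the fixed-point/transposition bookkeeping (including the delicate $m=2$ case) that the paper's proof leaves implicit.
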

\begin{proof}
If $\pi, \pi'$ have odd order, the result follows from the corollary above. Assume $\pi, \pi'$ have even order. $\pi', \pi_0'$ act non-trivially on $md$ points. It is clear that $\pi, \pi'$ have the same number of fix points and so we can assume that there exists $\rho \in \Cent_0(\sigma)$ so that $\rho\mu'\rho^{-1}=\mu$ and $\rho\pi_0'\rho^{-1}, \pi$ have the same image under $\Cent_0(\sigma)/K$. The conjugacy in $S_n$ further forces that $(\pi_0')^m=(\pi_0)^m=1$ or $(\pi_0')^m=(\pi_0)^m=\tau_1...\tau_k$. The result now follows from \ref{importantabelianlemma}.
\end{proof}

Note that Corollary \ref{relprimecor} and \ref{transpocase} can easily be generalized to the case where $\pi$ has arbitrary cycle type under $\Cent_0(\sigma)/K$. 

\subsection{Centralizer Actions for Dihedral Source}

Assume as before that $\sigma \in S_n$ is the product of $k$ disjoint cycles of length $d$, $\tau_1\tau_2...\tau_k$ and $\mathbf{Fix}(\sigma)= \emptyset$. We are interested in exploring elements $\pi, \pi' \in S_n$ of the type 
\begin{equation}
\pi^2 = (\pi')^2 = 1 \; \; \mathrm{and} \; \; \pi\sigma\pi^{-1} = (\pi')\sigma(\pi')^{-1} = \sigma^{-1}. 
\end{equation}
We start by noting that $\pi, \pi'$ are elements in a larger subgroup than $\Cent_0(\sigma)$, $H_{\sigma}$, where we also have a transitive action on the cycles $\{\tau_1,\tau_2,...,\tau_k\}$:
\begin{lemma}
\label{actiondihedral}
The subgroup $H_{\sigma} = \{  x \in S_n | x\sigma x^{-1} = \sigma^{z}, \; z\in \mathbb{Z} \}$ acts transitively on the cycles $\{\tau_1,\tau_2,...,\tau_k\}$. The kernel $K_{H_\sigma}$ of this action is 
\begin{equation}
K_{H_\sigma} = \{ x \in H_\sigma | x\tau_ix^{-1} = \tau_i^z, \; \; z \in \mathbb{Z} \; \mathrm{for} \; \mathrm{all} \; 1 \leq i \leq k \}.
\end{equation}
\end{lemma}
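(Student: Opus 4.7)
The plan is to first establish that $H_\sigma$ is a subgroup and that the exponent $z$ appearing in $x\sigma x^{-1} = \sigma^z$ is always coprime to $d$, then construct the action on $\{\tau_1,\ldots,\tau_k\}$ by reading off a permutation from uniqueness of cycle decomposition, then deduce transitivity from Lemma \ref{actioncycles}, and finally identify the kernel. The coprimality observation is the one step that is not purely formal: conjugation in $S_n$ preserves cycle type, so $\sigma^z$ must again be a product of $k$ disjoint $d$-cycles, which forces $\gcd(z,d) = 1$. Closure of $H_\sigma$ under products and inverses then follows from $(xy)\sigma(xy)^{-1} = \sigma^{ab}$ whenever $x\sigma x^{-1} = \sigma^a$ and $y\sigma y^{-1} = \sigma^b$, plus the analogous computation for $x^{-1}$.

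To define the action, fix $x \in H_\sigma$ with $x\sigma x^{-1} = \sigma^z$. Because $\gcd(z,d) = 1$, the right-hand side splits as
\[
\sigma^z = \tau_1^z \tau_2^z \cdots \tau_k^z,
\]
a product of $k$ disjoint $d$-cycles with each $\tau_i^z$ having the same support as $\tau_i$. The left-hand side equals $(x\tau_1 x^{-1})(x\tau_2 x^{-1}) \cdots (x\tau_k x^{-1})$, which is also a product of $k$ disjoint $d$-cycles. By uniqueness of disjoint cycle decomposition there is a unique $\pi \in S_k$ with $x\tau_i x^{-1} = \tau_{\pi(i)}^z$ for every $i$, and I set $x \cdot \tau_i := \tau_{\pi(i)}$. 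A short verification shows this is a genuine group action. Transitivity is then essentially free: $\Cent_0(\sigma)$ sits inside $H_\sigma$ as the $z=1$ layer and already acts transitively by Lemma \ref{actioncycles}.

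For the kernel, $x \in K_{H_\sigma}$ iff $x \cdot \tau_i = \tau_i$ for every $i$, iff $x\tau_i x^{-1}$ is a $d$-cycle with the same support as $\tau_i$, iff $x\tau_i x^{-1} = \tau_i^{z_i}$ for some integer $z_i$ coprime to $d$ — exactly the stated description, since any $d$-cycle on the support of $\tau_i$ is a power of $\tau_i$. (One can additionally observe that the $z_i$ all coincide modulo $d$ with the ambient $z$, since $\prod_i \tau_i^{z_i} = \sigma^z$ by disjointness.) The main obstacle, though technical rather than conceptual, is pinning down the coprimality $\gcd(z,d)=1$: without it the factors $\tau_i^z$ would fragment into shorter cycles and the matching between the two cycle decompositions would break, so the whole action would fail to be well-defined. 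Everything else is bookkeeping with cycle decompositions together with a direct appeal to Lemma \ref{actioncycles}.
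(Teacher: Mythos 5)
Your proof is correct and supplies exactly what the paper leaves implicit: the paper's own proof is a one-line deferral to Lemma \ref{actioncycles}, and your transitivity step likewise reduces to that lemma via the containment $\Cent_0(\sigma)\leq H_\sigma$, while your preliminary work (closure of $H_\sigma$, the forced coprimality $\gcd(z,d)=1$, and the identity $x\tau_i x^{-1}=\tau_{\pi(i)}^{z}$ defining the action) provides the well-definedness the statement actually needs, since conjugation by a general element of $H_\sigma$ sends $\tau_i$ to a power of some $\tau_j$ rather than to an element of $\{\tau_1,\dots,\tau_k\}$ itself. One justification is off, although the step it supports is fine: the parenthetical claim that any $d$-cycle on the support of $\tau_i$ is a power of $\tau_i$ is false for $d\geq 4$ (for instance $(1243)$ is not a power of $(1234)$); the correct reason that $x\cdot\tau_i=\tau_i$ forces $x\tau_i x^{-1}=\tau_i^{z}$ is the identity $x\tau_i x^{-1}=\tau_{\pi(i)}^{z}$ you had already established, applied with $\pi(i)=i$. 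With that justification swapped in, the kernel identification, including the observation that all the exponents $z_i$ agree with the ambient $z$ modulo $d$, stands as written.
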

\begin{proof}
This is straightforward to prove, see Lemma \ref{actioncycles}. 
\end{proof}

It is obvious that $\Cent_0(\sigma) \leq H_\sigma$, and in fact $\Cent_0(\sigma) \unlhd H_\sigma.$  As for the non-trivial centralizer, we get a permutation representation, this time from $H_\sigma$ to $S_k$: 
\begin{equation}
H_{\sigma} \to H_{\sigma}/K_{H_\sigma} \cong S_k. 
\end{equation}
The main goal of this subsection is to prove that if $\pi, \pi'$ are conjugate in $S_n$ and if the images of the elements under $H_{\sigma}/K_{H_\sigma}$ are also conjugate in $S_k$, then there exists some $\rho \in \Cent_0(\sigma)$ so that 
\begin{equation}
\rho (\pi') \rho^{-1} = \pi.
\end{equation}

We first look at elements $s_1, s_2 \in S_{X_{\tau_1}}$, that send a single cycle $\tau_1$ to its own inverse or elements $s_1,s_2 \in S_{X_{\tau_1,\tau_2}}$ that sends the cycle $\tau_1$ to the inverse of a distint cycle $\tau_2$. 

\begin{lemma}
\label{dihed1}
Assume $\tau_1 = (12\cdots d)$ and $\tau_2 = (j_1j_2 \cdots j_d)$. Let $s_1, s_2 \in S_{X_{\tau_1,\tau_2}}$ both be products of disjoint transpositions such that 
\begin{equation}
s_1\tau_1s_1^{-1} = s_2\tau_1s_2^{-1}=\tau_2^{-1}. 
\end{equation}
There exists an integer $z$ such that 
\begin{equation}
\tau_2^{z}s_1\tau_2^{-z} = s_2.
\end{equation}
\end{lemma}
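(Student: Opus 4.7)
The plan is to analyze the combinatorial structure of involutions $s \in S_{X_{\tau_1,\tau_2}}$ conjugating $\tau_1$ to $\tau_2^{-1}$, and show that any such $s$ is determined by the single value $s(1)$. First I would observe that any such $s$ must send $X_{\tau_1} = \{1,\ldots,d\}$ bijectively onto $X_{\tau_2^{-1}} = X_{\tau_2} = \{j_1,\ldots,j_d\}$ (these sets are disjoint since $\tau_1, \tau_2$ are disjoint cycles). Combined with the hypothesis that $s$ is a product of disjoint transpositions in $S_{X_{\tau_1,\tau_2}}$, this forces $s$ to consist of exactly $d$ transpositions, each pairing a point of $\{1,\ldots,d\}$ with a point of $\{j_1,\ldots,j_d\}$: there is no room for $s$-fixed points in $X_{\tau_1,\tau_2}$, nor for transpositions contained entirely inside $X_{\tau_1}$ or $X_{\tau_2}$.

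Next I would extract a rigidity statement from the intertwining relation $s\tau_1 = \tau_2^{-1}s$ (using $s = s^{-1}$). Evaluating at $1$ and iterating gives $s(k) = \tau_2^{-(k-1)}(s(1))$ for every $k \in \{1,\ldots,d\}$. Since the pairing structure from step one recovers $s$ on $\{j_1,\ldots,j_d\}$ from its values on $\{1,\ldots,d\}$, the entire involution $s$ is pinned down by the single value $s(1) \in \{j_1,\ldots,j_d\}$. In particular, any two involutions of the specified type that agree at $1$ are equal; equivalently, there are at most $d$ such involutions.

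To finish, I would choose the integer $z$ so that $\tau_2^z(s_1(1)) = s_2(1)$, which is possible because $\tau_2$ acts transitively on its support $\{j_1,\ldots,j_d\}$. A direct check shows that $\tau_2^z s_1 \tau_2^{-z}$ is again a product of $d$ disjoint transpositions (it is conjugate to $s_1$) and still conjugates $\tau_1$ to $\tau_2^{-1}$, since $\tau_2^z$ commutes with $\tau_1$. Its value at $1$ is $\tau_2^z(s_1(1)) = s_2(1)$, so by the uniqueness established in step two, $\tau_2^z s_1 \tau_2^{-z} = s_2$, giving the claim. The only part requiring genuine care is the combinatorial step one, where one must rule out pathological transposition patterns using disjointness of the supports; once that pairing structure is in hand, the rest is essentially a one-parameter counting argument along the orbit of $s_1$ under conjugation by $\langle \tau_2 \rangle$.
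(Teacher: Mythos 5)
Your proof is correct and follows essentially the same route as the paper's: both arguments rest on the observation that an involution of this type is completely determined by where it sends the point $1$, so that the $\langle\tau_2\rangle$-orbit of $s_1$ under conjugation exhausts all $d$ possibilities and in particular hits $s_2$. You simply spell out the rigidity step (via $s(k)=\tau_2^{-(k-1)}(s(1))$) that the paper asserts without detail.
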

\begin{proof}
There are precisely $d$ ways to send the cycle $\tau_1$ to $\tau_j^{-1}$ through transpositions. Once we send the point 1 in $\tau_1$ to some point in $\tau_2$, the element is determined. Assume $s_1 = (1j_1)...$ and $s_2 = (1j_i)...$. Then 
\begin{equation}
\tau_2^{i-1}s_1\tau_2^{-(i-1)}=s_2.
\end{equation}
\end{proof}

\begin{lemma}
\label{dihed2}
Let $\tau_1=(12\cdots d)$ and let $s_1,s_2 \in S_{X_{\tau_1}}$ be products of disjoint transpositions such that 
\begin{equation}
s_1\tau_1s_1^{-1}=s_2\tau_1s_2^{-1}=\tau_1^{-1}.
\end{equation}
If $s_1,s_2$ are conjugate in $S_n$, there exists some integer $z$ so that: 
\begin{equation}
\tau_1^z s_1\tau_1^{-z}=s_2.
\end{equation}
\end{lemma}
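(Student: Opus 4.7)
The proof plan is to recognize the dihedral substructure on the points of $\tau_1$ and reduce to a parity calculation. Any involution $s \in S_{X_{\tau_1}}$ conjugating $\tau_1$ to $\tau_1^{-1}$ lies in the normalizer of $\langle \tau_1\rangle$ in $S_{X_{\tau_1}}$, which is the dihedral group $D_d = \langle \tau_1, s_1 \rangle$ of order $2d$. Further, $s$ must be a reflection, i.e.\ lie outside $\langle \tau_1\rangle$, since the only possible involution inside $\langle \tau_1 \rangle$ is $\tau_1^{d/2}$ for even $d$, and that element centralizes $\tau_1$ rather than inverting it. Hence both $s_1$ and $s_2$ belong to the nontrivial coset $\langle \tau_1\rangle s_1$, and I may write $s_2 = \tau_1^j s_1$ for a unique $j \in \{0, 1, \ldots, d-1\}$.

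Next, using the defining relation $s_1 \tau_1 s_1^{-1} = \tau_1^{-1}$, a short manipulation gives
\begin{equation}
\tau_1^z s_1 \tau_1^{-z} = \tau_1^{2z} s_1,
\end{equation}
so finding $z$ with $\tau_1^z s_1 \tau_1^{-z} = s_2$ is equivalent to solving the congruence $2z \equiv j \pmod{d}$. When $d$ is odd, $2$ is invertible modulo $d$ and such a $z$ always exists; no extra hypothesis on $s_1, s_2$ is needed in this case.

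The main obstacle, and the place where the hypothesis of $S_n$-conjugacy is essential, is the even case. I would resolve it by a direct cycle-type count on the $d$ reflections $\tau_1^i s_1$ restricted to the points of $\tau_1$: when $d$ is even, these reflections alternate in cycle type as $i$ runs through $0, 1, \ldots, d-1$, with half of them being fixed-point-free (cycle type $d/2$ disjoint transpositions) and the other half having exactly two fixed points together with $(d-2)/2$ transpositions. Consequently, two reflections $\tau_1^i s_1$ and $\tau_1^{i'} s_1$ share a cycle type in $S_n$ if and only if $i$ and $i'$ have the same parity. Since $s_1$ and $s_2$ are assumed conjugate in $S_n$, they share cycle type, so the exponent $j$ must be even, and the congruence $2z \equiv j \pmod{d}$ is solvable. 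This parity comparison is really the only non-routine step; everything else is standard bookkeeping in $D_d$.
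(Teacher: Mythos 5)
Your proof is correct in substance, and it takes a more algebraic route than the paper's. The paper identifies $s_1,s_2$ with reflections of a regular $d$-gon and argues geometrically, case by case: for odd $d$ a reflection is determined by its fixed vertex, for even $d$ one distinguishes vertex-type from edge-type reflections (which is exactly the cycle-type dichotomy), and in each case the conjugating power of $\tau_1$ is exhibited directly. You instead use the coset structure of the inverting elements: $s_2=\tau_1^{j}s_1$, the identity $\tau_1^{z}s_1\tau_1^{-z}=\tau_1^{2z}s_1$, and the reduction of the lemma to the solvability of $2z\equiv j\pmod d$, with the $S_n$-conjugacy hypothesis entering only to force $j$ even when $d$ is even (your parity count of fixed points is the paper's vertex/edge dichotomy in algebraic form). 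What your version buys is that the role of the hypothesis is isolated in a single congruence and the odd case needs no hypothesis at all; what the paper's version buys is the explicit geometric picture of axes of symmetry, which it leans on informally again in Lemma \ref{importantdihedlemma}.

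One justification should be repaired. The normalizer of $\langle\tau_1\rangle$ in $S_{X_{\tau_1}}$ is not the dihedral group of order $2d$ in general: it is the holomorph $C_d\rtimes\mathrm{Aut}(C_d)$, of order $d\cdot\phi(d)$ with $\phi$ Euler's totient, which is strictly larger than $\langle\tau_1,s_1\rangle$ whenever $\phi(d)>2$ (e.g.\ $d=5$ or $d\geq 7$); so ``$s$ normalizes $\langle\tau_1\rangle$, hence lies in $D_d$'' does not stand as written. The conclusion you actually need, $s_2\in\langle\tau_1\rangle s_1$, is nonetheless immediate by a different one-line argument: since $s_1,s_2$ are involutions with $s_1\tau_1s_1^{-1}=s_2\tau_1s_2^{-1}=\tau_1^{-1}$, the element $s_2s_1^{-1}$ centralizes $\tau_1$, and the centralizer of the $d$-cycle $\tau_1$ in $S_{X_{\tau_1}}$ is exactly $\langle\tau_1\rangle$, giving $s_2=\tau_1^{j}s_1$. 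With that substitution, the rest of your argument goes through as stated.
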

\begin{proof}
The elements $s_1,s_2$ may precisely be associated with reflections along the lines of symmetry of a regular polygon with $d$ vertices. If $d$ is odd, $s_1,s_2$ will have cycle type $\frac{d-1}{2}$ transpositions. The lines of symmetry of a regular polygon with an odd number of vertices is completely determined by the vertex it intersects. Assume $s_1$ fixes vertex 1 and $s_2$ fixes vertex $i$. Then $\tau_1^{i-1}s_1\tau_1^{-(i-1)}=s_2$. 

If $d$ is even, a line of symmetry either intersects two vertices or an edge. In the former case $s_1,s_2$ have cycle type $\frac{d-2}{2}$ and the elements are again completely determined by a fixed vertex so that if $s_1$ fixes vertex 1 and $s_2$ fixes vertex i, then $\tau_1^{i-1}s_1\tau_1^{-(i-1)}=s_2$.  

In the latter case, $s_1,s_2$ have cycle type $\frac{d}{2}$ transpositions and the elements are determined by an edge that the line of symmetry intersects. Let $s_1 = (12)...$ and $s_2=(i \; i+1)...$ , then $\tau_1^{i-1}s_1\tau_1^{-(i-1)}=s_2$ and we are done. 
\end{proof}
We are ready to state the main lemma:

\begin{lemma}
\label{importantdihedlemma}
Let $\pi,\pi'$ be elements conjugate in $S_n$ and assume that their images under $H_{\sigma}/K_{H_{\sigma}}$ have the same cycle type. Then, there exists some $\rho \in \Cent_0(\sigma)$ so that $\rho\pi\rho^{-1}=\pi'$.  
\end{lemma}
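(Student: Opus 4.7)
The plan is to reduce the problem orbit-by-orbit until the two local Lemmas \ref{dihed1} and \ref{dihed2} apply. Since $\pi^2=1$ and $\pi\sigma\pi^{-1}=\sigma^{-1}$, the image $\alpha$ of $\pi$ in $H_\sigma/K_{H_\sigma}\cong S_k$ is an involution whose orbits on $\{\tau_1,\ldots,\tau_k\}$ are either singletons $\{i\}$ (with $\pi\tau_i\pi^{-1}=\tau_i^{-1}$) or pairs $\{i,j\}$ (with $\pi\tau_i\pi^{-1}=\tau_j^{-1}$); the same holds for $\pi'$. My first step is to use that the images $\alpha$ and $\alpha'$ of $\pi,\pi'$ are conjugate in $S_k$ (same cycle type by hypothesis) together with the surjection $\Cent_0(\sigma)\to\Cent_0(\sigma)/K\cong S_k$ of Lemma \ref{actioncycles} to produce some $\rho_1\in\Cent_0(\sigma)$ so that, after replacing $\pi'$ by $\rho_1\pi'\rho_1^{-1}$, both elements share the same image $\alpha$. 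Conjugation by $\rho_1$ preserves $\sigma$, the involution property, and $S_n$-conjugacy to $\pi$, so this reduction is harmless.

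Once the images agree, I restrict $\pi$ and $\pi'$ to each $\alpha$-orbit. On a paired orbit $\{i,j\}$, both restrictions are products of $d$ transpositions sending $\tau_i$ to $\tau_j^{-1}$, so Lemma \ref{dihed1} directly yields some power $\tau_j^{z_{ij}}$ conjugating $\pi'|_{X_{\tau_i}\cup X_{\tau_j}}$ to $\pi|_{X_{\tau_i}\cup X_{\tau_j}}$. On a singleton orbit $\{i\}$, both restrictions are involutions in $S_{X_{\tau_i}}$ sending $\tau_i$ to $\tau_i^{-1}$, and Lemma \ref{dihed2} gives some $\tau_i^{z_i}$ doing the job, \emph{provided} the two restrictions are conjugate in $S_n$.

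The main obstacle is securing this matching of restriction cycle types on singleton orbits. For odd $d$ it is automatic, since every such reflection has cycle type $(d-1)/2$ transpositions with one fixed point. For even $d$ each singleton restriction is either an ``edge'' reflection with $d/2$ transpositions or a ``vertex'' reflection with $(d-2)/2$ transpositions and two fixed points. Comparing fixed-point counts of $\pi$ and $\pi'$ in $S_n$---and noting that paired orbits contribute no fixed points---forces the multisets of restriction types on the singleton indices to agree. A further conjugation by an element $\rho_2\in\Cent_0(\sigma)$ whose image in $S_k$ is a permutation of the singleton set (and hence centralizes $\alpha$) realigns the type assignment on singletons without disturbing the paired restrictions or the matched image.

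Finally, I collect the powers of the $\tau_i$ provided orbit by orbit by Lemmas \ref{dihed1} and \ref{dihed2} into a single element $\rho_3\in K\subseteq\Cent_0(\sigma)$, which is well defined because the local contributions act on disjoint point sets. Then $\rho=\rho_3\rho_2\rho_1\in\Cent_0(\sigma)$ satisfies $\rho\pi'\rho^{-1}=\pi$, completing the proof. The only non-routine step is the type-matching bookkeeping on singleton orbits when $d$ is even; the rest is a clean reduction to the earlier dihedral lemmas and the transitive action on cycles.
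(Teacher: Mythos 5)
Your proof is correct and follows essentially the same route as the paper: decompose $\pi,\pi'$ into the part inverting cycles in place and the part swapping cycles in pairs, use the surjection $\Cent_0(\sigma)\to S_k$ to align the images, and then finish orbit-by-orbit with Lemmas \ref{dihed1} and \ref{dihed2}, assembling the local powers of the $\tau_i$ into one element of $K$. If anything, your fixed-point count handling the edge-versus-vertex type matching on singleton cycles for even $d$ makes explicit a step the paper passes over when it asserts that $\mu,\mu'$ are conjugate in $S_n$.
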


\begin{proof}
We may write $\pi,\pi'$ in the form $\pi=\mu\pi_0$, $\pi'=\mu'\pi_0'$ where $\mu, \mu' \in K_{H_{\sigma}}$ and $\pi_0, \pi_0'$ either fix all points in a cycle $\tau_i$ or sends all the points to a distinct cycle $\tau_j$. Assume $\mu$ acts non-trivially on the points in the cycles $\tau_1,...,\tau_m$. Then, $\pi_0$ acts non-trivially on the cycles $\tau_{m+1},...,\tau_k$. Since $\pi, \pi'$ have the same cycle type under $H_{\sigma}/K_{H_\sigma}$, this implies that $\mu, \mu'$ are conjugate in $S_n$. By the transitive action on the cycles $\tau_1,..,\tau_k$ in $\Cent_0(\sigma)$, this in turn implies that there exists some $\rho \in \Cent_0(\sigma)$ so that $\rho \mu' \rho^{-1}$ acts non-trivially on the cycles $\tau_1,...,\tau_m$ and  $\rho\pi_0'\rho^{-1}, \rho\pi\rho^{-1}$ have the same image under $H_\sigma/K_{H_{\sigma}}$. The result now follows from Lemma \ref{dihed1} and \ref{dihed2}. 
 \end{proof}

 \begin{corollary}
 \label{odddihedral}
 If the order of $\sigma$, $|\tau_1...\tau_k|=d$ is odd and $\pi, \pi'$ conjugate in $S_n$, there exists some $\rho \in \Cent_0(\sigma)$ so that $\rho \pi \rho^{-1} = \pi'$
 \end{corollary}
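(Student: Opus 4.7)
The plan is to reduce the statement to Lemma \ref{importantdihedlemma}. That lemma requires two hypotheses: that $\pi,\pi'$ are conjugate in $S_n$, and that their images $\bar\pi,\bar{\pi'}$ under $H_\sigma/K_{H_\sigma}\cong S_k$ have the same cycle type in $S_k$. The first is given, so the entire task is to establish the second. Since $\pi^2=(\pi')^2=1$, both $\bar\pi$ and $\bar{\pi'}$ are involutions in $S_k$, and two involutions are conjugate in $S_k$ exactly when they have the same number of $2$-cycles, equivalently the same number of fixed points. So I only need to compare the number of cycles of $\sigma$ that $\pi$ fixes (as sets) with the number that $\pi'$ fixes.

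The key step is a counting argument that uses the oddness of $d$. Write $\bar\pi$ as a product of $m$ disjoint transpositions together with $k-2m$ fixed cycles. For each pair $(\tau_i,\tau_j)$ that $\bar\pi$ swaps, the restriction of $\pi$ to the $2d$ points of $\tau_i\cup\tau_j$ carries points of $\tau_i$ into $\tau_j$ and vice versa, so it contributes no fixed points. For each cycle $\tau_i$ left invariant by $\bar\pi$, the restriction of $\pi$ is an involution on $\tau_i$ with $\pi\tau_i\pi^{-1}=\tau_i^{-1}$; by the odd-$d$ case analyzed in Lemma \ref{dihed2}, such an involution has cycle type $\tfrac{d-1}{2}$ transpositions and therefore exactly one fixed point. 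Because $\mathbf{Fix}(\sigma)=\emptyset$, every point of $\{1,\dots,n\}$ lies in some $\tau_i$, so the total number of fixed points of $\pi$ is exactly $k-2m$, the number of cycles fixed by $\bar\pi$.

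To finish, observe that conjugacy of $\pi$ and $\pi'$ in $S_n$ forces equality of their fixed-point counts. Applying the displayed count to both, the number of cycles fixed by $\bar\pi$ equals that fixed by $\bar{\pi'}$; hence $\bar\pi$ and $\bar{\pi'}$ are conjugate involutions in $S_k$ and have the same cycle type. Lemma \ref{importantdihedlemma} then produces the desired $\rho\in\Cent_0(\sigma)$ with $\rho\pi\rho^{-1}=\pi'$.

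The main obstacle, such as it is, lies in the counting step, but with $d$ odd Lemma \ref{dihed2} delivers the ``exactly one fixed point per inverted cycle'' statement cleanly, and everything else is bookkeeping. If $d$ were even, the reflection picture in Lemma \ref{dihed2} would allow either $0$ or $2$ fixed points on each inverted cycle, and this uniform count would fail---which is precisely why the corollary is restricted to odd $d$.
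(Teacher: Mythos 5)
Your proof is correct and follows essentially the same route as the paper: reduce to Lemma \ref{importantdihedlemma} by showing that for odd $d$, conjugacy of $\pi,\pi'$ in $S_n$ forces their images under $H_\sigma/K_{H_\sigma}$ to have the same cycle type. The paper merely asserts this forcing step, whereas you justify it with the fixed-point count (one fixed point per inverted cycle, none on swapped pairs), which is exactly the implicit argument.
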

 \begin{proof}
 If $d$ is odd and $\pi, \pi'$ conjugate in $S_n$, this forces $\pi, \pi'$ to have the same image under $H_{\sigma}/K_{H_{\sigma}}$. The result follows from Lemma \ref{importantdihedlemma}.
 \end{proof}

\section{Homomorphism Conjugacy in the Symmetric Group}
Let $\varphi, \psi$ be two generator-conjugate homomorphisms from a group $G$ to $S_n$ with respect to the generators $g_1,....,g_n$. We first wish to determine when $\varphi, \psi$ are conjugate as homomorphisms. We may assume that 
\begin{equation}
\varphi(g_1)=\psi(g_1),
\end{equation}
so that if $\varphi, \psi$ are conjugate, there exists some $\rho \in \Cent_{S_n}(\varphi(g_1))$ so that 
\begin{equation}
\rho\varphi\rho^{-1}=\psi.
\end{equation}
This observation, in conjunction with the results derived about centralizer actions from the previous section, gives the first part of our main result: determining when generator-conjugate homomorphisms to $S_n$ are conjugate for an abelian or dihedral source group with two generators. The second part of the main result, describing the relationship between element-conjugate and conjugate homomorphisms for such source groups, is derived by first studying the relationship between generator-conjugate and element-conjugate homomorphisms through centralizer actions in $S_n$.
\subsection{Homomorphisms Conjugacy for Abelian Source}
Let $\varphi,\psi$ be two homomorphisms from an abelian group $A$, generated by two elements $a,b$ to the symmetric group $S_n$.

Assume $\varphi,\psi$ are generator-conjugate with respect to $a,b$ and that
\begin{equation}
\varphi(a)=\psi(a).
\end{equation}
We may set $\varphi(a)=\sigma$ and rewrite the element so that $\sigma=\sigma_1...\sigma_l$ where $\sigma_i$ is a product of $k_i$ disjoint cycles of length $d_i$, $\tau_{(1,i)}\tau_{(2,i)}...\tau_{(k_i,i)}$, and the integers $d_1,...,d_l$ are all distinct. By the direct product decomposition of centralizers in the symmetric group (Lemma \ref{directproduct}), we may also rewrite $\varphi(b), \psi(b)$:
\begin{equation}
\begin{split}
\varphi(b) &= \mu_0\pi_1\pi_2...\pi_l \\ 
\psi(b) &= \mu_0'\pi_1'\pi_2'...\pi_l',
\end{split}
\end{equation}
where $\mu_0,\mu_0' \in S_{\mathbf{Fix(\sigma}}$ and $\pi_i,\pi_i' \in \Cent_0(\sigma_i)$. By Remark \ref{decomp}, each $\pi_i,\pi_i'$ may decomposed as products: $\pi_i = \mu_i\pi_{0,i}$ and $\pi_i' = \mu_i\pi_{0,i}'$ where $\mu_i,\mu_i' \in K_i = \langle \tau_{1,i},...,\tau_{k_i,i}\rangle$ and $\pi_i, \pi_i'$ either fix all points in a cycle $\tau_{j,i}$ or sends all points to some distinct cycle $\tau_{j',i}$. Note that we have that 
\begin{equation}
\begin{split}
\mu_i &= \tau_{1,i}^{z_{(1,i)}}\tau_{2,i}^{z_{(2,i)}}...\tau_{k_i,i}^{z_{(k_i,i)}} \\
\mu_i' &= \tau_{1,i}^{z_{(1,i)}'}\tau_{2,i}^{z_{(2,i)}'}...\tau_{k_i,i}^{z_{(k_i,i)}'},
\end{split}
\end{equation}
for some integers $z_{(1,i)},..,z_{(k_i,i)}$ and $z_{(1,i)}',..,z_{(k_i,i)}'$.

The action in $\Cent_0(\sigma_i)$ gives a further decomposition of the elements $\pi_{(0,i)}$ and $\pi_{(0,i)}'$ as products of elements in the non-trivial centralizer: 
\begin{equation}
\begin{split}
    \pi_{(0,i)} &= x_{(1,i)}....x_{(h,i)} \\ 
    \pi_{(0,i)}' &= y_{(1,1)}....y_{(h',i)},
\end{split}
\end{equation}
where the image of every $x_{(j,i)},y_{(j,i)}$ is a single cycle under $\Cent_0(\sigma_i)/K_i$. 
\begin{theorem}
\label{abeliantheorem}
The homomorphisms $\varphi, \psi$ are conjugate if and only if: 
\begin{enumerate}[(i)]
\item $\mu_0, \mu_0'$ are conjugate in $S_n$.
\item The lists $z_{(1,i)},..,z_{(k_i,i)}$ and $z_{(1,i)}',..,z_{(k_i,i)}'$are identical up to rearrangement for $1\leq i \leq l$ . 
\item For every $x_{(j,i)}$ whose image is an $m$-cycle under $\Cent_0(\sigma_i)/K_i$ there exists some $y_{(j',i)}$ whose image is also an $m$-cycle under $\Cent_0(\sigma_i)/K_i$ where $x_{(j,i)}^m = \tau_{t,i}^{z_{j,i}}...$ and $y_{j'}^m=\tau_{t',i}^{z_{j,i}}...$ for some integer $z_{j,i}$.
\end{enumerate}
\end{theorem}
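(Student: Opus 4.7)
The overall strategy is to use the direct product decomposition of Lemma \ref{directproduct} to reduce the problem of finding a single conjugator $\rho \in \Cent_{S_n}(\sigma)$ to finding one conjugator for each factor independently, and then to apply Lemma \ref{importantabelianlemma} within each factor $\Cent_0(\sigma_i)$.

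For necessity, suppose $\rho\varphi\rho^{-1}=\psi$. Because $\varphi(a)=\psi(a)=\sigma$, we have $\rho \in \Cent_{S_n}(\sigma)$, so Lemma \ref{directproduct} yields a decomposition $\rho = \rho_0\rho_1\cdots\rho_l$ with $\rho_0 \in S_{\mathbf{Fix}(\sigma)}$ and $\rho_i \in \Cent_0(\sigma_i)$. Equating the conjugation relation $\rho\varphi(b)\rho^{-1}=\psi(b)$ factor-by-factor (each factor leaves the others pointwise fixed), I get $\rho_0\mu_0\rho_0^{-1}=\mu_0'$ (giving (i)) and $\rho_i\pi_i\rho_i^{-1}=\pi_i'$ for each $i$. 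Each $\pi_i$ lies in $\Cent_0(\sigma_i)$, so Remarks \ref{frstremark}, \ref{sndremark}, and \ref{trdremark}, applied cycle by cycle in the image in $\Cent_0(\sigma_i)/K_i$, force conditions (ii) and (iii) to hold.

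For sufficiency, I would construct $\rho$ factor by factor. Choosing $\rho_0 \in S_{\mathbf{Fix}(\sigma)}$ is immediate from (i). Within each $\Cent_0(\sigma_i)$, I need $\rho_i$ conjugating $\pi_i = \mu_i \pi_{(0,i)}$ to $\pi_i' = \mu_i'\pi_{(0,i)}'$. First, use condition (iii) to fix a matching of the single-cycle factors $x_{(j,i)} \leftrightarrow y_{(j',i)}$ (same $m$ and same $z_{j,i}$); the orbits of $x_{(j,i)}$ and $y_{(j',i)}$ on $\{\tau_{1,i},\dots,\tau_{k_i,i}\}$ are disjoint within each side, so this matching determines a permutation of the cycle set. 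Using the transitive action of Lemma \ref{actioncycles} (together with condition (ii), which guarantees the $\mu$-exponents line up under that permutation), I can find an element of $\Cent_0(\sigma_i)$ that realizes the matching on cycles and simultaneously conjugates $\mu_i'$ to $\mu_i$. After this preliminary conjugation, the problem within each matched pair $(x_{(j,i)}, y_{(j',i)})$ becomes exactly the setting of Lemma \ref{importantabelianlemma}: same single-cycle image, matching $\mu$-data on the relevant $\tau$'s (trivial there, since the $x$'s contain no $\mu$-part), and the $m$-th power condition supplied by (iii). The lemma supplies a local conjugator supported on the cycles moved by that pair, and since distinct pairs act on disjoint cycles, the local conjugators commute and multiply to a single $\rho_i$.

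The main obstacle is the bookkeeping in the sufficiency direction: condition (iii) only matches the $x_{(j,i)}$'s to the $y_{(j',i)}$'s individually, while condition (ii) is a global statement about all exponents in $\mu_i$. I need to ensure the matching of the $x$'s with $y$'s and the rearrangement of the $\mu$-exponents can be realized \emph{simultaneously} by a single element of $\Cent_0(\sigma_i)$. This is where the transitivity of Lemma \ref{actioncycles} does the heavy lifting, combined with the observation that the $\mu_i$-exponents at the cycles moved by a given $x_{(j,i)}$ are forced to be zero (since $x_{(j,i)}$ does not fix those cycles' indices pointwise in $S_{k_i}$), so (ii) reduces to matching exponents on the complementary fixed cycles — which is unconstrained by (iii) and only requires a suitable element from Lemma \ref{actioncycles}. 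Once the pieces are assembled, setting $\rho = \rho_0\rho_1\cdots\rho_l$ gives $\rho\varphi\rho^{-1}=\psi$ on the generators $a,b$, and hence on all of $A$.
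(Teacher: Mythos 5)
Your proof follows essentially the same route as the paper: reduce to the factors $S_{\mathbf{Fix}(\sigma)}$ and $\Cent_0(\sigma_i)$ via the direct product decomposition (Lemma \ref{directproduct}), then apply Lemma \ref{importantabelianlemma} (equivalently Remarks \ref{frstremark}, \ref{sndremark}, \ref{trdremark}) within each factor using the decomposition into single-cycle-image pieces. Your write-up is in fact more explicit than the paper's own proof about assembling the conjugator in the sufficiency direction, but the underlying argument is the same.
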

\begin{proof}
By the direct product decomposition of the centralizer of $\varphi(a)=\psi(a)=\sigma$ we must assume that there exists some $\rho_0 \in S_{\mathbf{Fix}(\sigma)}$ so that $\rho_0\mu_0'\rho_0=\mu_0$ and $\rho_i \in \Cent_0(\sigma_i)$ so that $\rho_i\pi_i'\rho_i=\pi_i$ for $1\leq i \leq l$. Condition (i) follows from that cycle type determines conjugacy in symmetric groups. Condition (ii) follows directly from Lemma \ref{importantabelianlemma}. Condition (iii) follows from the decomposition of $\pi_{0,i},\pi_{0,i}'$ into disjoint elements whose image under $\Cent_0(\sigma_i)/K_i$ is a single cycle and Lemma \ref{importantabelianlemma}.
\end{proof}
The following two corollaries follow quite easily from Theorem \ref{abeliantheorem} and generalizing the results in Corollary \ref{relprimecor} and \ref{transpocase}. 
\begin{corollary}
\label{relprime}
If the order of $a,b$ are relatively prime then $\varphi, \psi$ are conjugate if and only if: $\mu_0,\mu_0'$ conjugate in $S_n$ and $\pi_i, \pi_i'$ are conjugate in $S_n$ for $1\leq i \leq l$.
\end{corollary}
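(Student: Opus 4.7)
My plan is to reduce the claim to a componentwise application of Corollary \ref{relprimecor} via the direct product decomposition of $\Cent_{S_n}(\sigma)$ from Lemma \ref{directproduct}. The forward direction is immediate: if $h\varphi h^{-1} = \psi$, then $h\sigma h^{-1} = \sigma$ places $h$ in $\Cent_{S_n}(\sigma)$, and writing $h = h_0 h_1 \cdots h_l$ in the direct product decomposition, the equation $h\varphi(b) h^{-1} = \psi(b)$ splits factorwise to give $h_0 \mu_0 h_0^{-1} = \mu_0'$ in $S_{\mathbf{Fix}(\sigma)}$ and $h_i \pi_i h_i^{-1} = \pi_i'$ in $\Cent_0(\sigma_i)$ for $1 \leq i \leq l$, so each pair is in particular conjugate in $S_n$.

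For the converse, the key step is verifying that coprimality of $|a|$ and $|b|$ descends to each factor. Because $\varphi$ is a homomorphism, $|\sigma|$ divides $|a|$ and $|\varphi(b)|$ divides $|b|$; and because $\varphi(b) = \mu_0 \pi_1 \cdots \pi_l$ is a product of commuting elements with pairwise disjoint supports, each $|\pi_i|$ divides $|\varphi(b)|$, hence divides $|b|$. Since $d_i$ divides $|\sigma|$ and therefore $|a|$, this forces $d_i$ and $|\pi_i|$ to be coprime. Corollary \ref{relprimecor} then applies to $\sigma_i$ together with $\pi_i, \pi_i'$, yielding $\rho_i \in \Cent_0(\sigma_i)$ with $\rho_i \pi_i' \rho_i^{-1} = \pi_i$.

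Finally, for the fixed-point factor, $\mu_0, \mu_0' \in S_{\mathbf{Fix}(\sigma)}$ conjugate in $S_n$ share a cycle type on $\mathbf{Fix}(\sigma)$ and are hence conjugate within $S_{\mathbf{Fix}(\sigma)}$, producing $\rho_0$. Setting $\rho = \rho_0 \rho_1 \cdots \rho_l \in \Cent_{S_n}(\sigma)$ yields $\rho\psi(a)\rho^{-1} = \sigma = \varphi(a)$ and $\rho\psi(b)\rho^{-1} = \varphi(b)$, so $\rho\psi\rho^{-1} = \varphi$ since $a, b$ generate $A$. The only obstacle is bookkeeping: confirming the divisibility chain that gives coprimality in each factor, and noting that $\varphi(b), \psi(b) \in \Cent_{S_n}(\sigma)$ (automatic since $A$ is abelian) so that the direct product decomposition of Lemma \ref{directproduct} indeed applies to them. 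No ideas beyond Theorem \ref{abeliantheorem} and Corollary \ref{relprimecor} are required.
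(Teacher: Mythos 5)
Your proposal is correct and follows essentially the paper's own route: the paper likewise deduces the corollary from the direct product decomposition (via Theorem \ref{abeliantheorem}) together with Corollary \ref{relprimecor} applied in each factor, with your coprimality bookkeeping making explicit why the hypotheses descend to each $\sigma_i,\pi_i$. Note only that, as stated, Corollary \ref{relprimecor} assumes the image of $\pi$ under $\Cent_0(\sigma_i)/K_i$ is a single cycle, so like the paper you are implicitly invoking its easy generalization to arbitrary cycle type.
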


\begin{corollary}
If  $|a|=2$ then $\varphi, \psi$ are conjugate if and only if: $\mu_0,\mu_0'$ conjugate in $S_n$, $\pi_i, \pi_i'$ are conjugate in $S_n$, and the elements also have the same cycle type under $\Cent_0(\sigma_i)/K_i$ for $1\leq i \leq l$.
\end{corollary}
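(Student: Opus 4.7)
The plan is to reduce the corollary to a direct application of Theorem~\ref{abeliantheorem} combined with the single-block result Corollary~\ref{transpocase}. The key simplification is that $|a|=2$ forces $\sigma = \varphi(a) = \psi(a)$ to satisfy $\sigma^2 = 1$, so $\sigma$ is a product of disjoint transpositions together with fixed points. In the decomposition $\sigma = \sigma_1 \cdots \sigma_l$ there is therefore at most one non-trivial block, namely the product of all transpositions with $d_1 = 2$. The centralizer decomposition of Lemma~\ref{directproduct} then collapses to $\Cent_{S_n}(\sigma) = S_{\mathbf{Fix}(\sigma)} \times \Cent_0(\sigma_1)$, giving $\varphi(b) = \mu_0 \pi_1$ and $\psi(b) = \mu_0' \pi_1'$.

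For necessity, if $\varphi$ and $\psi$ are conjugate then Theorem~\ref{abeliantheorem} immediately yields $\mu_0, \mu_0'$ conjugate in $S_n$ and an element $\rho_1 \in \Cent_0(\sigma_1)$ with $\rho_1 \pi_1' \rho_1^{-1} = \pi_1$, so in particular $\pi_1, \pi_1'$ are conjugate in $S_n$. Projecting this conjugation through $\Cent_0(\sigma_1)/K_1$ also forces $\pi_1, \pi_1'$ to have the same cycle type in the quotient, as in Remark~\ref{frstremark}.

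For sufficiency, since $d_1 = 2$ I would invoke Corollary~\ref{transpocase} directly to promote the $S_n$-conjugacy of $\pi_1, \pi_1'$ together with equality of their cycle types under $\Cent_0(\sigma_1)/K_1$ to honest conjugacy inside $\Cent_0(\sigma_1)$. Conjugacy of $\mu_0, \mu_0'$ in $S_n$ is equivalent to their conjugacy in $S_{\mathbf{Fix}(\sigma)}$, since both vanish outside $\mathbf{Fix}(\sigma)$ and cycle type determines conjugacy in any symmetric group. Using the product decomposition of $\Cent_{S_n}(\sigma)$, assemble the two conjugators into a single $\rho \in \Cent_{S_n}(\sigma)$ with $\rho \varphi(b) \rho^{-1} = \psi(b)$; since $\rho$ centralizes $\sigma = \varphi(a) = \psi(a)$ and $A = \langle a, b \rangle$, this $\rho$ conjugates $\varphi$ to $\psi$ as homomorphisms.

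There is essentially no hard step here, as all the real work has been done upstream: Corollary~\ref{transpocase} is precisely the single-block statement needed, and the direct product decomposition handles the reassembly. The only subtlety worth flagging is the identification of conjugacy of $\mu_0, \mu_0'$ in $S_n$ with conjugacy in $S_{\mathbf{Fix}(\sigma)}$, which is what legitimately places the fixed-point conjugator in the correct factor of the centralizer.
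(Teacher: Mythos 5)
Your proposal is correct and follows essentially the route the paper intends: the paper gives no detailed argument, saying only that this corollary follows from Theorem \ref{abeliantheorem} together with a generalization of Corollary \ref{transpocase}, and your reduction (since $|a|=2$ forces $\sigma^2=1$, there is at most one non-trivial block with $d_1=2$), the split of the conjugation problem via Lemma \ref{directproduct} into $S_{\mathbf{Fix}(\sigma)}\times\Cent_0(\sigma_1)$, and the reassembly of the two conjugators into a single $\rho\in\Cent_{S_n}(\sigma)$ is exactly that argument made explicit. The one step to tighten is your claim to invoke Corollary \ref{transpocase} ``directly'': that corollary is proved inside the standing setting of Lemma \ref{importantabelianlemma}, where the image of $\pi$ under $\Cent_0(\sigma)/K$ is a single $m$-cycle, whereas your $\pi_1$ in general has arbitrary quotient cycle type; what you actually need is the generalization the paper flags immediately after Corollary \ref{transpocase}, namely decomposing $\pi_1,\pi_1'$ into components whose quotient images are single cycles and checking that $S_n$-conjugacy plus equality of quotient cycle types allows the components to be matched so that corresponding components have the same value of $\pi_0^m$ (trivial versus a product of $m$ of the transpositions $\tau_i$) --- for $d=2$ this is a short counting argument on cycle lengths, but it is a genuine step rather than a citation.
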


Note that although the source group is cyclic in Corollary \ref{relprime}, the result is not trivial. We can easily define generator-conjugate homomorphisms from cyclic source groups that are not conjugate. 

\begin{ex}
Consider the homomorphisms $\varphi, \psi: A \cong \Z/3\Z \times \Z/2\Z \to S_{12}:$
\begin{equation}
\begin{split}
&\varphi, \psi: a \mapsto (123)(456); \\
&\varphi: b \mapsto (14)(25)(36), \; \psi: b \mapsto (78)(9\;10)(11\;12).
\end{split}
\end{equation}
\end{ex}
The homomorphisms are clearly generator-conjugate, but one can check that $\varphi(ab), \psi(ab)$ have different cycle types in $S_n$. 

Further, the conditions set out in Theorem \ref{abeliantheorem} make it easy to define distinct generator-conjugate homomorphisms from a non-cyclic abelian group to $S_n$.
\begin{ex}
Consider the homomorphisms $\varphi, \psi: A \cong \Z/4\Z \times \Z/2\Z \to S_{11}:$
\begin{equation}
\begin{split}
&\varphi, \psi: a \mapsto (1234)(5678); \\
&\varphi: b \mapsto (15)(26)(37)(48)(9\;10), \; \psi: b \mapsto (16)(27)(38)(45)(10\;11).
\end{split}
\end{equation}
\end{ex}
These homomorphisms are conjugate by Theorem \ref{abeliantheorem} and it is straightforward to check they are distinct. 

We conclude this subsection by describing the relationship between element-conjugate and conjugate homomorphisms: 

\begin{corollary}
\label{localglobalabelian}
The homomorphisms $\varphi, \psi$ are conjugate if and only if they are element-conjugate on restriction to $S_{\mathrm{Fix}(\sigma)}$ and $\Cent_0(\sigma_i)$, for $1 \leq i \leq l$.  
\end{corollary}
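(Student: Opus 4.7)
The plan is to reduce the question to the individual factors of the direct product decomposition
\[
\Cent_{S_n}(\sigma)=S_{\mathbf{Fix}(\sigma)}\times\Cent_0(\sigma_1)\times\cdots\times\Cent_0(\sigma_l)
\]
from Lemma \ref{directproduct}, and then to invoke Theorem \ref{abeliantheorem} for the synthesis.

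For the forward direction, suppose $\rho\in S_n$ conjugates $\varphi$ to $\psi$. Then $\rho\sigma\rho^{-1}=\rho\varphi(a)\rho^{-1}=\psi(a)=\sigma$, so $\rho\in \Cent_{S_n}(\sigma)$, and Lemma \ref{directproduct} lets us write $\rho=\rho_0\rho_1\cdots\rho_l$ with $\rho_0\in S_{\mathbf{Fix}(\sigma)}$ and each $\rho_i\in\Cent_0(\sigma_i)$. Since these factors act on mutually disjoint supports, conjugating $\varphi(b)=\mu_0\pi_1\cdots\pi_l$ by $\rho$ is componentwise, forcing $\rho_0\mu_0\rho_0^{-1}=\mu_0'$ and $\rho_i\pi_i\rho_i^{-1}=\pi_i'$ separately. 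Hence each projected homomorphism is itself conjugate inside its own factor, which in particular gives element-conjugacy on each restriction.

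For the reverse direction, I would verify the three conditions of Theorem \ref{abeliantheorem} by evaluating the hypothesis at the generator $g=b$. Element-conjugacy on the restriction to $S_{\mathbf{Fix}(\sigma)}$ produces an $h_0\in S_{\mathbf{Fix}(\sigma)}\le S_n$ with $h_0\mu_0h_0^{-1}=\mu_0'$, which is condition (i). Element-conjugacy on each restriction to $\Cent_0(\sigma_i)$ produces an $h_i\in\Cent_0(\sigma_i)$ with $h_i\pi_ih_i^{-1}=\pi_i'$; by Lemma \ref{importantabelianlemma}, together with the extension to arbitrary cycle type under $\Cent_0(\sigma_i)/K_i$ noted after Corollary \ref{transpocase}, this conjugacy inside $\Cent_0(\sigma_i)$ is exactly what forces the matching of the $z$-lists in condition (ii) and the matching of the $x_{(j,i)}^m$ structures in condition (iii). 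Invoking Theorem \ref{abeliantheorem} then yields the desired global conjugacy $\varphi\sim\psi$.

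The main obstacle will be the appeal to the extension of Lemma \ref{importantabelianlemma} to arbitrary cycle type, since that lemma as stated assumes the image of $\pi$ under $\Cent_0(\sigma_i)/K_i$ is a single $m$-cycle. To extract condition (iii) in full generality one decomposes $\pi_{0,i}$ and $\pi_{0,i}'$ into the disjoint blocks $x_{(j,i)}$, $y_{(j,i)}$ indexed by the cycles of their respective images in $\Cent_0(\sigma_i)/K_i$, and uses the transitive action of $\Cent_0(\sigma_i)$ on $\{\tau_{1,i},\ldots,\tau_{k_i,i}\}$ (Lemma \ref{actioncycles}) to move matched blocks onto one another simultaneously, so that each block-level conjugacy reduces to the single-cycle case already handled. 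This is conceptually straightforward but notationally involved; once it is in hand, the corollary is immediate.
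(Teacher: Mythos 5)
Your forward direction is fine and agrees with the paper. The reverse direction, however, has a genuine gap at the step where you claim that element-conjugacy on the restriction to $\Cent_0(\sigma_i)$ ``produces an $h_i\in\Cent_0(\sigma_i)$ with $h_i\pi_ih_i^{-1}=\pi_i'$.'' That is not what the hypothesis supplies. Element-conjugacy of the restricted homomorphisms means that for each $g\in A$ the components of $\varphi(g)$ and $\psi(g)$ supported on $X_{\sigma_i}$ are conjugate in the symmetric group on that support, i.e.\ merely have the same cycle type; it gives no conjugator lying in $\Cent_0(\sigma_i)$, and producing one is essentially the whole content of the corollary (a conjugator in each factor for $\pi_i,\pi_i'$, multiplied together with $h_0$, is already a global conjugator of $\varphi$ and $\psi$, since every such element fixes $\sigma$). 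The paper's example after Lemma \ref{actioncycles} shows the leap is false: $\pi=(1324)(5768)$ and $\pi'=(1357)(2468)$ both centralize $\sigma=(12)(34)(56)(78)$ and are conjugate in $S_8$, yet they are not conjugate in $\Cent_0(\sigma)$ because their images under $\Cent_0(\sigma)/K$ have different cycle types. So the cycle-type data at the single element $b$ --- which is all you use when you ``evaluate the hypothesis at $g=b$'' --- cannot force conditions (ii) and (iii) of Theorem \ref{abeliantheorem}. (If you instead read ``element-conjugate'' with target $\Cent_0(\sigma_i)$, so that conjugators are assumed to lie in the factor, then $h_0h_1\cdots h_l$ conjugates $\varphi$ to $\psi$ directly and your detour through Lemma \ref{importantabelianlemma} and Theorem \ref{abeliantheorem} is superfluous --- but then the corollary is a near-tautology and not the element-conjugacy statement the paper intends, as its own proof and the parallel Corollary \ref{localglobaldihedral} make clear.)

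What the paper actually does is exploit the hypothesis at \emph{all} elements $a^{h_1}b^{h_2}$, not just at $b$: the lemma following the corollary compares the numbers of fixed points of $\varphi(a^{h_1}b^{h_2})$ and $\psi(a^{h_1}b^{h_2})$ for all $h_1,h_2$, and from these counts (together with an induction over the block lengths $m_1\le\cdots\le m_h$ of the pieces $x_j$) recovers exactly the matching of the lists $z_{(1,i)},\dots,z_{(k_i,i)}$ and the matching of the $x_{(j,i)}^m$ structures, i.e.\ conditions (ii) and (iii) of Theorem \ref{abeliantheorem}; condition (i) comes from the restriction to $S_{\mathbf{Fix}(\sigma)}$. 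Your synthesis step --- feeding the three conditions into Theorem \ref{abeliantheorem} --- is the right endgame, and your remarks on extending Lemma \ref{importantabelianlemma} to arbitrary cycle type are reasonable, but the extraction of (ii) and (iii) from the element-conjugacy hypothesis is missing from your argument: you need the fixed-point-counting argument over the whole group (or some substitute using more than the single element $b$) to bridge from ``same cycle type in each factor for every element'' to ``conjugate within each centralizer factor.''
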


If $\varphi, \psi$ are conjugate as homomorphisms, it is clear by the direct product decomposition of the centralizer that we must assume that the homomorphisms are also element-conjugate when restricted to $S_{Fix(\sigma)}$ and each subgroup $\Cent_0(\sigma_i)$. It is then relevant to restrict ourselves to investigating when generator-conjugate homomorphisms are element-conjugate in the case where $\psi(a)=\psi(a)$ is the product of $k$ disjoint cycles of length $d$, $\tau_1...\tau_k$ and this element has no fix points in $S_n$. Recall that in this case, we can decompose $\varphi(b), \psi(b)$ such that: 
\begin{equation}
\begin{split}
    \varphi(b) &= \mu x_{1}....x_{h} \\ 
    \psi(b) &= \mu'y_{1}....y_{h'},
\end{split}
\end{equation}
where $\mu, \mu' \in K = \langle\tau_1,...,\tau_k\rangle$ and the images of all $x_j,y_j$ under $\Cent_0(\sigma)/K$ are single cycles and $x_i,y_i$ do not  non-trivially send points in a cycle to itself. We also have that $\mu = \tau_1^{z_1}...\tau_k^{z_k}$ and $\mu' = \tau_1^{n_1}...\tau_k^{n_k}$ for some integers $z_1,...,z_k$ and $n_1,...,n_k$.

We present an important lemma that relates generator-conjugacy to element-conjugacy:

\begin{lemma}
Assume $\psi(a)=\varphi(a)= \tau_1....\tau_k$ and also that $\mathbf{Fix}(\tau_1...\tau_k)=\emptyset$. If $\varphi,\psi$ are generator-conjugate, they are element-conjugate if and only if: 
\begin{enumerate}[(i)]
\item The lists $z_{i},..,z_{k}$ and $n_{1},..,n_{k}$ are identical up to rearrangement. 
\item For every $x_{j}$ whose image is an $m$-cycle under $\Cent_0(\tau_1...\tau_k)/K$ there exists some $y_{j'}$ whose image is also an $m$-cycle under $\Cent_0(\sigma_i)/K_i$ and $x_{j,}^m = \tau_{i}^{z}...$, $y_{j'}^m=\tau_{i'}^{z}...$ for some integer $z$.  
\end{enumerate}
\end{lemma}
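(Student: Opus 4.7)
The plan is to reduce element-conjugacy to a cycle-type comparison in $S_n$ and then invoke Lemma \ref{importantabelianlemma} piecewise. Since $A$ is abelian and $\mathbf{Fix}(\sigma)=\emptyset$, both $\varphi(b)$ and $\psi(b)$ lie in $\Cent_{S_n}(\sigma)=\Cent_0(\sigma)$, so element-conjugacy of $\varphi,\psi$ amounts to the statement that for every $(i,j)\in\Z^{2}$, the elements $\sigma^{i}\varphi(b)^{j}$ and $\sigma^{i}\psi(b)^{j}$ have the same cycle type. The key structural tool is that, letting $X_{j}\subseteq\{1,\ldots,k\}$ denote the cycles of $\sigma$ moved by $x_{j}$, one can regroup
\begin{equation}
\varphi(b) \;=\; \mu_{\mathrm{rest}}\cdot\prod_{j=1}^{h}\bigl(\mu_{X_{j}}\,x_{j}\bigr),\qquad \mu_{X_{j}}=\prod_{i\in X_{j}}\tau_{i}^{z_{i}},\quad \mu_{\mathrm{rest}}=\prod_{i\notin\bigcup_{j}X_{j}}\tau_{i}^{z_{i}},
\end{equation}
and these factors pairwise commute because they act on disjoint $\sigma$-orbits; an identical decomposition $\psi(b)=\mu'_{\mathrm{rest}}\prod_{j'}(\mu'_{Y_{j'}}y_{j'})$ applies on the $\psi$-side.

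For the reverse direction I would upgrade (i) and (ii) to genuine conjugacy of $\varphi(b)$ and $\psi(b)$ inside $\Cent_0(\sigma)$, which is strictly stronger than element-conjugacy and therefore suffices. Condition (ii) lets me pair each factor $\mu_{X_{j}}x_{j}$ with a factor $\mu'_{Y_{j'}}y_{j'}$ of the same image-cycle length $m$ and the same uniform $m$-th power, so Lemma \ref{importantabelianlemma} applied to this pair produces a conjugator $\rho_{j}\in\Cent_0(\sigma)$ supported on $X_{j}\cup Y_{j'}$; condition (i), together with the transitive action of Lemma \ref{actioncycles}, supplies a conjugator for the leftover factors $\mu_{\mathrm{rest}}$ and $\mu'_{\mathrm{rest}}$, whose $\tau$-exponent multisets coincide by hypothesis. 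Because the $X_{j}$'s and the support of $\mu_{\mathrm{rest}}$ are pairwise disjoint subsets of $\{\tau_{1},\ldots,\tau_{k}\}$, these partial conjugators splice into a single $\rho\in\Cent_0(\sigma)$ sending $\varphi(b)$ to $\psi(b)$, and combined with $\rho\sigma\rho^{-1}=\sigma$ this conjugates $\varphi$ to $\psi$ as homomorphisms.

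For the forward direction I argue by contrapositive, producing a test element $a^{i}b^{j}\in A$ whose $\varphi$- and $\psi$-images have distinct cycle types. If (i) fails, I take $M$ equal to the least common multiple of all image-cycle lengths of $\varphi(b)$ and $\psi(b)$ modulo $K$, so that $\varphi(b)^{M},\psi(b)^{M}\in K$; the commuting decomposition together with the expansion $(\mu_{X_{j}}x_{j})^{M}=\bigl(\prod_{r<M}x_{j}^{r}\mu_{X_{j}}x_{j}^{-r}\bigr)x_{j}^{M}$ shows that the $\tau_{i}$-exponent in $\varphi(b)^{M}$ equals $Mz_{i}$ for $i\notin\bigcup X_{j}$ and is \emph{uniform} across each $X_{j}$-block, equal to $(M/m_{j})(Z_{j}+w_{j})$ with $Z_{j}=\sum_{i\in X_{j}}z_{i}$ and $w_{j}$ determined by $x_{j}^{m_{j}}$. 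Comparing cycle types of $\sigma^{i}\varphi(b)^{M}$ and $\sigma^{i}\psi(b)^{M}$ as $i$ ranges over $\Z$ recovers the multiset of residues $\{M z_{i}\bmod d\}$ from the family of gcd-multisets $\{\gcd(i+a_{\ell},d)\}_{\ell}$, which together with the per-block uniformity forces $\{z_{i}\}=\{n_{i}\}$. Failure of (ii) is detected similarly by choosing the exponent of $b$ to isolate the discrepancy in a single block. The principal obstacle throughout is the non-commutativity of $\mu$ with $\pi_{0}$; the commuting-factors rewriting of $\varphi(b)$ is precisely the device that keeps the powers and their cycle types computable, and the final extraction of (i) hinges on the observation that residues modulo $d$ are recoverable from the one-parameter family of cycle types of $\sigma^{i}\cdot(\text{element of }K)$.
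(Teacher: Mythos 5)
Your reverse direction is essentially the paper's route: conditions (i) and (ii) are exactly the hypotheses of Lemma \ref{importantabelianlemma} (equivalently Theorem \ref{abeliantheorem} with $\mathbf{Fix}(\sigma)=\emptyset$), so they yield conjugacy of $\varphi$ and $\psi$ by an element of $\Cent_0(\sigma)$, which is stronger than element-conjugacy; the block-by-block conjugators and the transitive action of Lemma \ref{actioncycles} are the same mechanism the paper relies on. One caveat: for the spliced $\rho$ to be a single permutation you need the matching of $x_j$-blocks with $y_{j'}$-blocks to be a bijection (and the leftover unmoved cycles to be matched bijectively with equal exponents); condition (ii) as stated only asserts existence, so you should say explicitly that generator-conjugacy plus counting upgrades it to a bijective pairing.

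The forward direction has a genuine gap. You test only the elements $a^{i}b^{M}$ with $M$ the least common multiple of all image-cycle lengths, so that $\varphi(b)^{M},\psi(b)^{M}\in K$. But the cycle type of $\sigma^{i}\varphi(b)^{M}$ depends only on the exponents of $\varphi(b)^{M}$ modulo $d$, and on a cycle $\tau_{i}$ not moved by any block that exponent is $Mz_{i}$, not $z_{i}$. The map $z\mapsto Mz \bmod d$ is not injective whenever $\gcd(M,d)>1$, and there is no reason for $M$ to be coprime to $d$: for instance with $d=4$ and one block of image-length $2$, $M$ is even, and then $z_{i}=1$ versus $n_{i}=3$ give identical families $\sigma^{i}\varphi(b)^{M}$, $\sigma^{i}\psi(b)^{M}$ even though (i) fails. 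So "recovers $\{Mz_{i}\bmod d\}$ \dots forces $\{z_{i}\}=\{n_{i}\}$" is a non sequitur, and your contrapositive produces no witness. The paper's (terse) argument uses the exponent $1$ on $b$ instead: in $\varphi(a^{h_{1}}b)$ every cycle moved by some $x_{j}$ contributes no fixed points, while an unmoved cycle $\tau_{i}$ contributes $d$ fixed points exactly when $h_{1}+z_{i}\equiv 0 \pmod d$, so comparing fixed-point counts as $h_{1}$ varies recovers the multiset $\{z_{i}\bmod d\}$ directly and (i) is immediate; (ii) is then extracted by comparing fixed points of $\varphi(a^{h_{1}}b^{m_{j}})$ and $\psi(a^{h_{1}}b^{m_{j}})$ with the blocks ordered by increasing image-cycle length $m_{1}\leq\cdots\leq m_{h}$ and inducting. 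Replacing your lcm power by these low powers repairs the direction; as written, the argument for (i) does not go through.
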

\begin{proof}
If $\varphi,\psi$ are element-conjugate, $\varphi(a^{h_1}b^{h_2})$ and $\psi(a^{h_1}b^{h_2})$ have the same number of fix points for all integers $h_1,h_2$. This immediately implies that the lists $z_{i},..,z_{k}$ and $n_{1},..,n_{k}$ are identical up to rearrangement. Ordering the elements $x_1,...,x_h$ after their cycle length under $\Cent_0(\sigma)$, $m_1\leq m_2 \leq ...\leq m_h$ , the rest of the proof follows from comparing fix points and a simple induction argument. 
\end{proof}

Note that this lemma is precisely the conditions required for homomorphism conjugacy in this case, and Corollary \ref{localglobalabelian} follows.

\subsection{Homomorphism Conjugacy for Dihedral Source}
Let $\varphi,\psi$ be homomorphisms from the dihedral group $D_{2m}$ to $S_n$ and assume the homomorphisms are generator-conjugate with respect to the generators $r,s$:
\begin{equation}
D_{2m} = \langle r,s \; | \; r^m=s^2 = 1 \; srs = r^{-1} \rangle.
\end{equation}
We assume $\varphi(r)=\psi(r)=\sigma$ and rewrite $\sigma$ in the form $\sigma=\sigma_1...\sigma_l$ where $\sigma_i$ is the product $k_i$ disjoint cycles of length $d_i$ and all the integers $d_1,..,d_l$ are distinct. By the invariance of cycle type under conjugation, the elements $\varphi(s),\psi(s)$ can be written as products: 
\begin{equation}
\begin{split}
\varphi(s) &= \mu \pi_{1}....\pi_{l} \\ 
    \psi(s) &= \mu'\pi_{1}'....\pi_{l}',
\end{split}
\end{equation}
where $\mu, \mu' \in S_{\mathbf{Fix}(\sigma)}$ and $\pi_i, \pi_i' \in H_{\sigma_i}$ recalling that
\begin{equation}
    H_{\sigma_i} = \{ x \in S_{X_{\sigma_i}} | \; x\sigma_ix=\sigma_i^z, \; z \in \Z \}.
\end{equation}
By previous results, the subgroup $H_{\sigma_i}$ acts on the cycles in $\sigma_i$ through the permutation representation $H_{\sigma_i}/K_{\sigma_i}$ (Lemma \ref{actiondihedral}).
\begin{theorem}
\label{dihedraltheorem}
The homomorphisms $\varphi, \psi$ are conjugate if and only if:
 \begin{enumerate}[(i)]
\item $\mu_0, \mu_0$ are conjugate in $S_n$.
\item $\pi_i,\pi_i'$ are conjugate in $S_n$ and the elements also have the same cycle type under $H_{\sigma_i}/K_{H_{\sigma_i}}$, $1\leq i \leq l$.
\end{enumerate}
\end{theorem}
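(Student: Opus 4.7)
The plan is to reduce the theorem to its single-cycle-length components by applying the direct product decomposition of the centralizer, and then to invoke Lemma \ref{importantdihedlemma} on each component.

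First I would verify that the block decomposition of $\varphi(s), \psi(s)$ asserted in the setup is genuine. The dihedral relation $srs = r^{-1}$ forces $\varphi(s)\sigma\varphi(s)^{-1} = \sigma^{-1}$ and $\varphi(s)^2 = 1$. Because $\sigma^{-1} = \sigma_1^{-1}\cdots \sigma_l^{-1}$, with the $\sigma_i^{-1}$ still having pairwise distinct cycle lengths $d_i$, any element inverting $\sigma$ must preserve the support of each $\sigma_i$ (and the fixed-point set of $\sigma$) setwise. This yields the decomposition $\varphi(s) = \mu\pi_1\cdots\pi_l$, $\psi(s) = \mu'\pi_1'\cdots\pi_l'$, in which each $\pi_i, \pi_i' \in H_{\sigma_i}$ is an involution inverting $\sigma_i$, and $\mu, \mu'$ are involutions in $S_{\mathbf{Fix}(\sigma)}$.

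For necessity, suppose $h \in S_n$ satisfies $h\varphi h^{-1} = \psi$. Since $\varphi(r) = \psi(r) = \sigma$, we have $h \in \Cent_{S_n}(\sigma)$. By Lemma \ref{directproduct}, $h$ factors as $h = h_0 h_1 \cdots h_l$ with $h_0 \in S_{\mathbf{Fix}(\sigma)}$ and $h_i \in \Cent_0(\sigma_i)$. Reading $h\varphi(s)h^{-1} = \psi(s)$ block-by-block gives $h_0\mu h_0^{-1} = \mu'$ and $h_i \pi_i h_i^{-1} = \pi_i'$ for $1 \leq i \leq l$. The first equation yields condition (i) (conjugate elements share cycle type), while the second shows $\pi_i, \pi_i'$ are conjugate in $S_n$. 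Moreover, $\Cent_0(\sigma_i) \subseteq H_{\sigma_i}$, so projecting $h_i\pi_i h_i^{-1} = \pi_i'$ along $H_{\sigma_i} \to H_{\sigma_i}/K_{H_{\sigma_i}} \cong S_{k_i}$ exhibits the images of $\pi_i, \pi_i'$ as conjugate in $S_{k_i}$; hence they share cycle type, completing (ii).

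For sufficiency, I would assemble $h$ block-by-block. Condition (i) means $\mu$ and $\mu'$ have the same cycle type in the symmetric group on $\mathbf{Fix}(\sigma)$, so some $h_0 \in S_{\mathbf{Fix}(\sigma)}$ conjugates $\mu$ to $\mu'$. For each $i$, condition (ii) is precisely the hypothesis of Lemma \ref{importantdihedlemma} applied with $\sigma_i$ in place of $\sigma$; it produces $h_i \in \Cent_0(\sigma_i)$ with $h_i \pi_i h_i^{-1} = \pi_i'$. Setting $h = h_0 h_1 \cdots h_l$, each factor commutes with $\sigma$ and acts on a disjoint block, so $h \in \Cent_{S_n}(\sigma)$ satisfies $h\varphi(r)h^{-1} = \psi(r)$ and $h\varphi(s)h^{-1} = \psi(s)$; since $r, s$ generate $D_{2m}$, we conclude $h\varphi h^{-1} = \psi$.

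The main obstacle is the block decomposition at the outset: one has to be confident that the direct product structure of $\Cent_{S_n}(\sigma)$ supplied by Lemma \ref{directproduct} genuinely transfers to the \emph{inverting} elements $\varphi(s), \psi(s)$, which relies essentially on the distinctness of the $d_i$. Once that is in place, the argument is a clean two-front reduction --- conjugacy of $\mu, \mu'$ by cycle type on $\mathbf{Fix}(\sigma)$, and the dihedral centralizer lemma on each $\sigma_i$-block --- and the rest is bookkeeping.
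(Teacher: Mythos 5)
Your proposal is correct and follows exactly the route the paper intends: decompose $\varphi(s),\psi(s)$ into blocks via the invariance of cycle type (the distinctness of the $d_i$), conjugate the fixed-point part by cycle type, and apply Lemma \ref{importantdihedlemma} on each $\sigma_i$-block, reassembling the conjugator through the direct product decomposition of $\Cent_{S_n}(\sigma)$. The paper's own proof is only a one-sentence citation of these same two ingredients, so your write-up is simply a fuller version of the same argument.
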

\begin{proof}
The direct product decomposition of $\varphi(s),\psi(s)$ due to invariance of cycle type and Lemma \ref{importantdihedlemma}.
\end{proof}
We finally note that the relationship between element-conjugacy and homomorphism conjugacy for the dihedral case is similar to the abelian case with two generators: 
\begin{corollary} 
\label{localglobaldihedral}
The homomorphisms $\varphi, \psi$ are conjugate if and only if they are element-conjugate on restriction to $S_{\mathrm{Fix}(\sigma)}$ and $H_{\sigma_i}$, for $1 \leq i \leq l$.  
\end{corollary}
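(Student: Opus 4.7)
The plan is to match the claim conditionwise against Theorem \ref{dihedraltheorem}. The forward direction $(\Rightarrow)$ is essentially immediate from Lemma \ref{directproduct}: if some $\rho\in\Cent_{S_n}(\sigma)$ satisfies $\rho\varphi\rho^{-1}=\psi$, then $\rho$ decomposes as $\rho_0\rho_1\cdots\rho_l$ with $\rho_0\in S_{\mathbf{Fix}(\sigma)}$ and $\rho_i\in\Cent_0(\sigma_i)\leq H_{\sigma_i}$, and each factor conjugates the corresponding component of $\varphi(g)$ to that of $\psi(g)$ for every $g\in D_{2m}$. The component restrictions are therefore conjugate, and in particular element-conjugate.

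For the reverse direction $(\Leftarrow)$, I will verify conditions (i) and (ii) of Theorem \ref{dihedraltheorem}. Condition (i), that $\mu,\mu'$ are conjugate in $S_n$, follows by applying element-conjugacy on the restriction to $S_{\mathbf{Fix}(\sigma)}$ at $g=s$. In condition (ii), the $S_n$-conjugacy of $\pi_i$ and $\pi_i'$ comes from element-conjugacy on the restriction to $H_{\sigma_i}$ at $g=s$. The only genuinely new task is to show that the images $p_i,p_i'$ of $\pi_i,\pi_i'$ in $H_{\sigma_i}/K_{H_{\sigma_i}}\cong S_{k_i}$ have the same cycle type. Since $\pi_i^2=(\pi_i')^2=1$, both $p_i$ and $p_i'$ are involutions, so their cycle types are determined by the single invariant $a_i=|\mathbf{Fix}(p_i)|$, and it suffices to prove $a_i=a_i'$.

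To recover $a_i$ I will apply element-conjugacy on the restriction to $H_{\sigma_i}$ at each $g=r^k s$, whose $i$-th component is $\pi_i\sigma_i^k$, obtaining that $\pi_i\sigma_i^k$ and $\pi_i'\sigma_i^k$ have the same fixed-point count in $\{1,\ldots,n\}$ for every $k$. Summing over $k=0,1,\ldots,d_i-1$, the contribution from any pair of cycles transposed by $p_i$ is zero (the swap persists for every $k$), while the contribution from any cycle fixed by $p_i$ equals the total number of vertex-fixed points of the $d_i$ reflections in the dihedral group of the $d_i$-gon; a direct count gives this total as $d_i$, both when $d_i$ is odd (each reflection fixes one vertex) and when $d_i$ is even ($d_i/2$ vertex reflections fix two each and $d_i/2$ edge reflections fix none). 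Equating the two summed counts yields $a_i d_i = a_i' d_i$, hence $a_i=a_i'$ and the cycle types of $p_i,p_i'$ agree. The main obstacle is this last step for even $d_i$: a single reflection on an even-length cycle can fix $0$ or $2$ points, so no individual $\pi_i\sigma_i^k$ pins down $a_i$, and only averaging over the full coset $\{r^k s:0\leq k<d_i\}$ yields a parity-uniform invariant.
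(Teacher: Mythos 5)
Your proposal is correct, and its overall skeleton agrees with the paper's: the forward direction is the direct product decomposition of $\Cent_{S_n}(\sigma)$ (Lemma \ref{directproduct}), and the reverse direction reduces to Theorem \ref{dihedraltheorem} (equivalently Lemma \ref{importantdihedlemma}), the only substantive task being to show that element-conjugacy of the restrictions forces $\pi_i,\pi_i'$ to have the same cycle type under $H_{\sigma_i}/K_{H_{\sigma_i}}$. Where you differ is in how that is extracted. The paper splits by parity: for odd $d_i$ it quotes Corollary \ref{odddihedral}, and for even $d_i$ it only sketches a contradiction using the single element $rs$, asserting that distinct quotient cycle types already make the restrictions of $\varphi(rs),\psi(rs)$ non-conjugate. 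You instead sum fixed-point counts of the restrictions of $\varphi(r^k s),\psi(r^k s)$ over the whole coset $k=0,\dots,d_i-1$: transposed pairs of cycles contribute $0$, while each cycle fixed by the quotient image contributes exactly $d_i$ (the total fixed-point count of all $d_i$ reflections of a $d_i$-gon, independent of parity), so the sum equals $a_i d_i$ and recovers $a_i=|\mathbf{Fix}(p_i)|$, which determines the cycle type of an involution in $S_{k_i}$. This treats odd and even $d_i$ uniformly, avoids Corollary \ref{odddihedral}, and supplies the detail the paper leaves implicit; the paper's shorter route is also recoverable, since on a fixed cycle multiplication by $\sigma_i$ swaps vertex-type and edge-type reflections, so comparing fixed points at $k=0$ and $k=1$ alone (i.e., at $s$ and $rs$) already pins down the number of fixed cycles, but verifying this requires essentially the same bookkeeping you carry out over the full coset. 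Both arguments are valid.
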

\begin{proof}
If the order of $r$ is odd, the result follows from Corollary \ref{odddihedral}. If $d$ is even, we proceed by contradiction to show that if $\pi_i, \pi_i'$ have distinct cycle types under $H_{\sigma_i}/K_{H_{\sigma_i}}$, on restriction to $H_{\sigma_i}$ the elements $\varphi(rs), \psi(rs)$ are not conjugate in $S_n$. 
\end{proof}
\bibliography{references}
\end{document}